\newtheorem*{thm*}{Theorem}
\newtheorem{thm}{Theorem}[section]
\newtheorem{lemma}[thm]{Lemma}
\newtheorem{prop}[thm]{Proposition}
\newtheorem{corollary}[thm]{Corollary}
\theoremstyle{definition}
\newtheorem{mydef}{Definition}
\theoremstyle{remark}
\newtheorem{remark}{Remark}
\newcommand{\ie}{\emph{i.e.}}
\newcommand{\hsmallspace}{\hspace{0.5cm}}
\newcommand{\vsmallspace}{\vspace{0.5cm}}
\newcommand{\R}{\mathbb{R}}
\newcommand{\N}{\mathbb{N}}
\newcommand{\rnd}{\R^{Nd}}
\newcommand{\jbar}{\bar{\jmath}}
\newcommand{\gra}[1]{\left \{ #1 \right \}}
\newcommand{\st}{\left | \right.}
\newcommand{\abs}[1]{\left| #1 \right|}
\newcommand{\de}{\mathop{}\!\mathrm{d}}
\DeclareMathOperator{\supp}{supp}
\title{Marginals with finite repulsive cost}
\author{U. Bindini}
\date{}
\begin{document}

\maketitle

\begin{abstract}
We consider a multimarginal transport problem with repulsive cost, where the marginals are all equal to a fixed probability $\rho \in \mathcal{P}(\R^d)$. We prove that, if the concentration of $\rho$ is less than $1/N$, then the problem has a solution of finite cost. The result is sharp, in the sense that there exists $\rho$ with concentration $1/N$ for which the cost is infinite.
\end{abstract}

\emph{2010 Mathematics Subject Classification:} 49J10, 49K10.

\section{Introduction}

Consider a system of $N$ unitary-charged particles of negligible mass under the effect of the Coulomb force. We may describe the stationary states using a wave-function $\psi(x_1, \dotsc, x_N)$, where $x_j \in \R^3$; via the Born interpretation, $\abs{\psi(x_1, \dotsc, x_N)}^2$ may be viewed as the density of the probability that the particles occupy the positions $x_1, \dotsc, x_N$, and it is symmetric, since the particles are indistinguishable.

When the semi-classical limit is considered, as already proved in \cite{bindini2017, cotar2013, cotar2017, lewin2017}, the stationary states reach the minimum of potential energy, \ie,
\begin{equation} \label{pot-min}
 V_0 = \min_{\psi} V(\psi) = \min \int_{\R^{3N}} c(x_1, \dotsc, x_N) \abs{\psi(x_1, \dotsc, x_N)}^2 \de x_1 \dotsm \de x_N, 
\end{equation}
where $c$ is the Coulomb (potential) cost function $c \colon (\R^3)^N \to \R$ defined as
\[
 c(x_1,\dotsc, x_N) = \sum_{1 \leq i < j \leq N} \frac{1}{\abs{x_i - x_j}}.
\]

This can also be viewed as the exchange correlation functional linking the Kohn-Sham to the Hohenberg-Kohn approach, see for instance \cite{gori2009density}.

Given any wave-function $\psi$, define its single-particle density as
\[
 \rho^{\psi}(x) = \int_{\R^{3(N-1)}} \abs{\psi(x, x_2, \dotsc, x_N)}^2 \de x_2 \dotsm \de x_N,
\]
which is quite natural from the physical point of view, since the charge density is a fundamental quantum-mechanical observable.

It is a well-known result by Lieb \cite{lieb1983} (see also Levy \cite{levy1979}) that the set of all possible marginal densities is
\[
 \mathcal{R} = \gra{\rho \in L^1(\R^d) \st \rho \geq 0, \sqrt{\rho} \in H^1(\R^d), \int_{\R^d} \rho(x) dx = 1}.
\]

One may thus consider
\[
 C(\rho) = \min \gra{ \int_{\R^{3N}} c(x_1, \dotsc, x_N) \abs{\psi(x_1, \dotsc, x_N)}^2 \de x_1 \dotsm \de x_N \st \rho^{\psi} = \rho },
\]
and factorize the original minimum problem \eqref{pot-min} as
\[
 V_0 = \min_{\rho \in \mathcal{R}} \min_{\rho^\psi = \rho} V(\psi) = \min_{\rho} C(\rho).
\]

This is a well known approach, which dates back to Thomas and Fermi, and was later revised by Hohenberg and Kohn \cite{hohenberg1964}, Levy \cite{levy1979} and Lieb \cite{lieb1983}, whose questions are still sources of ideas for this field.

\vsmallspace

In this paper, firstly we generalize the physical dimension $d = 3$ to any $d \geq 1$. Moreover, we adopt a measure-theoretic approach: instead of considering wave-functions, we set the problem for every probability over $(\R^d)^N$ and formulate the corresponding relaxed minimum problem
\[
 \mathcal{C}(P) = \min \int_{(\R^d)^N} c(x_1,\dotsc, x_N) \de P(x_1, \dotsc, x_N),
\]
where $P \in \mathcal{P}((\R^d)^N)$ is a probability measure. In this fashion, the single-particle density constraint gives rise to a multi-marginal optimal transport problem of the form
\begin{align}
\begin{split} \label{multi-problem}
 C(\rho) = \inf \bigg \{ &\int_{(\R^d)^N} c(x_1,\dotsc, x_N) \de P(x_1, \dotsc, x_N) \colon \\
 & P \in \mathcal{P}((\R^d)^N), \ \pi_{\#}^i P = \rho, i = 1, \dotsc, N \bigg \},
\end{split}
\end{align}
where $\rho$ is a fixed probability measure over $\R^d$, and $\pi^i$ is the projection over the $i$-th factor of $(\R^d)^N$. It is a simple and well known observation that the infimum \eqref{multi-problem} is equal to
\begin{align}
\begin{split} \label{sym-multi-problem}
 C(\rho) = \inf \bigg \{ &\int_{(\R^d)^N} c(x_1,\dotsc, x_N) \de P(x_1, \dotsc, x_N) \colon \\
 & P \in \mathcal{P}((\R^d)^N), P \text{ symmetric }, \pi_{\#}^i P = \rho, i = 1, \dotsc, N \bigg \}.
\end{split}
\end{align}

In order to give even a stronger result, we take as a cost function a general repulsive potential, as in the following

\begin{mydef} \label{repulsive-cost} A function $c \colon (\R^d)^N \to \R$ is a \emph{repulsive cost function} if it is of the form
\[
 c(x_1, \dotsc, x_N) = \sum_{1 \leq i < j \leq N} \frac{1}{\omega(\abs{x_i-x_j})}
\]
where $\omega \colon \R^{+} \to \R^{+}$ is continuous, strictly increasing, differentiable on $(0, +\infty)$, with $\omega(0) = 0$.
\end{mydef}

Although there are many works about this formulation, and the multi-marginal transport problem in general (see for instance \cite{buttazzo2016, colombo2013multimarginal, colombo2013equality, depascale2015, dimarino2017}), none of them gives a condition on $\rho$ which assures that the infimum in \eqref{sym-multi-problem} is finite. We found that the correct quantity to consider is the one given by the following

\begin{mydef}
 If $\rho \in \mathcal{P}(\R^d)$, the \emph{concentration} of $\rho$ is
  \[
  \mu(\rho) = \sup_{x \in \R^d} \rho(\gra{x}).
  \]
\end{mydef}

This allows us to state the main result:

\begin{thm} \label{main-thm}
 Let $c$ be a repulsive cost function, and $\rho \in \mathcal{P}(\R^d)$ with
 \begin{equation} \label{conc-condition}
  \mu(\rho) < \frac{1}{N}.
 \end{equation}
 
 Then the infimum in \eqref{sym-multi-problem} is finite.
\end{thm}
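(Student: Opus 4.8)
The plan is to construct an explicit competitor $P\in\P(\rnd)$ with $\pi^i_\# P=\rho$ for all $i$ and $\int c\,\de P<+\infty$; since $c$ is symmetric, symmetrizing $P$ over the $N!$ permutations of the coordinates changes neither the marginals nor the cost, so it suffices to produce such a $P$ without worrying about symmetry. Set $\delta:=\tfrac1N-\mu(\rho)>0$, the available slack. \emph{Step 1 (an adapted dyadic partition of $\rho$).} For every $x\in\supp\rho$ one has $\rho(\gra{x})\le\mu(\rho)<\tfrac1N$, and since the $\rho$-mass of the half-open dyadic cube of side $2^{-n}$ containing $x$ decreases to $\rho(\gra{x})$ as $n\to\infty$, there is a dyadic cube $Q\ni x$ with $\rho(Q)<\tfrac1N$. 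Assigning to each $x$ the \emph{maximal} such cube and using that two dyadic cubes are nested or disjoint, one obtains an at most countable Borel partition $\gra{Q_j}$ of a neighbourhood of $\supp\rho$ by dyadic cubes with $q_j:=\rho(Q_j)<\tfrac1N$. Translating the dyadic grid by a generic vector, we may in addition arrange that $\rho$ charges no face of any $Q_j$, i.e.\ $\rho(\partial Q_j)=0$. Write $\rho=\sum_j q_j\rho_j$ with $\rho_j:=\rho|_{Q_j}/q_j$.

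\emph{Step 2 (at most one particle per cube).} Since $0\le Nq_j<1$ for every $j$ and $\sum_j Nq_j=N$, the vector $(Nq_j)_j$ lies in the convex hull of the $0/1$-sequences with exactly $N$ entries equal to $1$; hence there is a probability measure $\lambda$ on the family of $N$-element index sets $S$ with $\sum_{S\ni j}\lambda(S)=Nq_j$ for all $j$ (the hypersimplex analogue of Birkhoff's theorem). Sampling $S=\gra{Q_{j_1},\dots,Q_{j_N}}$ from $\lambda$, assigning its $N$ cubes to the $N$ particles by a uniformly random bijection, and placing the particle sent to a cube $Q_{j_i}$ according to $\rho_{j_i}$, independently over the chosen cubes, one gets a (symmetric) $P$ with $\pi^i_\# P=\sum_j q_j\rho_j=\rho$ for each $i$, and with the key property that, $P$-almost surely, no two particles occupy the same cube.

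\emph{Step 3 (bounding the cost --- the crux).} Because any two particles sit in distinct cubes $Q_j\ne Q_{j'}$, and conditionally on occupying $Q_j,Q_{j'}$ their positions are $\rho_j\otimes\rho_{j'}$-distributed, one gets
\[
 \int c\,\de P\;\le\;\binom{N}{2}\sum_{j\ne j'}\mathbb{P}\big(Q_j,Q_{j'}\text{ occupied}\big)\iint\frac{\de\rho_j(x)\,\de\rho_{j'}(y)}{\omega(\abs{x-y})}\,,
\]
with $\mathbb{P}(Q_j,Q_{j'}\text{ occupied})\le N\min(q_j,q_{j'})$. For cubes far apart relative to their sizes the inner integral is at most $1/\omega(\operatorname{dist}(Q_j,Q_{j'}))$ and the resulting double sum converges because $\rho$ is a probability, so only a controlled amount of mass is found near a given cube at each scale. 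The delicate contribution is that of \emph{neighbouring} cubes, where $\operatorname{dist}(Q_j,Q_{j'})=0$ and the crude bound is useless: one must estimate the mutual energy $\iint\omega(\abs{x-y})^{-1}\de\rho_j\,\de\rho_{j'}$ directly, exploiting that $\rho$ charges no face (so the $\rho_j$-mass within distance $r$ of the shared face tends to $0$ as $r\to0$) and that, after the generic translation, the concentrated parts of $\rho$ meet every face transversally. I expect this multiscale mutual-energy estimate --- showing $\sum_{j\ne j'}q_j\iint\omega(\abs{x-y})^{-1}\de\rho_j\,\de\rho_{j'}<\infty$ --- to be the real work of the proof and the main obstacle, and it is precisely here that the uniform slack $\delta>0$ is spent, in forcing the geometric series over dyadic scales to converge; should the plain placement of Step 2 not suffice for some configurations of neighbouring cubes, one refines it by coupling the placements in adjacent cubes through a local shift at the scale of the smaller cube, as one does for the uniform measure on a cube viewed as a torus. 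Everything else is soft: the marginal identities, the symmetrization, and --- if one prefers to sidestep measurability issues --- a preliminary reduction to compactly supported, finitely atomic $\rho$ by weak approximation, using that $P\mapsto\int c\,\de P$ is lower semicontinuous along weakly convergent plans, the above bound then being needed uniformly along the approximating sequence.
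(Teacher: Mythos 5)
The construction in Steps 1--2 is sound (the marginals come out right, and symmetrization is harmless), but the theorem is not proved: the entire difficulty has been pushed into Step 3, which you yourself flag as ``the real work'', and the estimate you hope for there is not merely missing --- as stated, it is false for a general repulsive cost. In Definition \ref{repulsive-cost} the only constraint on $\omega$ near $0$ is $\omega(0)=0$ with $\omega$ continuous and increasing, so $1/\omega(r)$ may blow up arbitrarily fast as $r\to 0$ (take e.g.\ $\omega(r)=e^{-1/r}$ for $r>0$). If two adjacent cubes $Q_j,Q_{j'}$ are both occupied with positive probability and the positions inside them are placed independently according to $\rho_j\otimes\rho_{j'}$, then whenever $\rho$ charges every neighbourhood of the common face from both sides (an absolutely continuous $\rho$ with positive density there already does this) the mutual energy $\iint \omega(\abs{x-y})^{-1}\de\rho_j\de\rho_{j'}$ is $+\infty$ for such an $\omega$, and no amount of slack $\delta=\tfrac1N-\mu(\rho)$ in the occupation probabilities can repair a sum whose individual terms are infinite: the slack multiplies the terms, it does not regularize the singularity at the shared face. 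The proposed fix (a torus-like local shift coupling adjacent cubes) is only sketched for uniform measure and does not obviously survive general $\rho_j$ (atoms accumulating at a face, singular parts near it, a large cube with infinitely many tiny neighbours clustering at a boundary point). Even the ``far'' part of your bound is not justified: non-adjacent dyadic cubes can be at arbitrarily small positive distance, and the weighted double sum $\sum_{j\ne j'}\min(q_j,q_{j'})/\omega(\operatorname{dist}(Q_j,Q_{j'}))$ has no reason to converge just because $\sum_j q_j\le 1$.

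The structural point you are missing is that, precisely because $\omega$ is arbitrary near $0$, the only robust way to get finite cost is to produce a plan whose support avoids the strip $D_\alpha=\gra{\exists\, i\neq j,\ \abs{x_i-x_j}<\alpha}$ for some $\alpha>0$, so that each pairwise term is bounded by $1/\omega(\alpha)$. This is exactly what the paper does, and it is why its partitions are built with \emph{mutually positive distances} between the pieces (Corollary \ref{cor-partition-2}), rather than cubes sharing faces: the non-atomic part is split into well-separated chunks matched to the atoms, the purely atomic case with more than $N$ atoms is handled by a discrete double induction (Lemma \ref{t-lemma} and Proposition \ref{mega-prop}) that never needs a separation estimate because distinct atoms are automatically at positive distance, and the countable-atom case is reduced to finitely many atoms by trimming tails. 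To rescue your approach you would have to replace the independent placement across face-sharing cubes by a coupling that keeps distinct particles at a uniformly positive distance --- at which point you are essentially forced to reprove the paper's separation lemmas, so the dyadic decomposition buys nothing.
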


After this paper was already submitted, the author became aware of an independent work in preparation by F. Stra, S. Di Marino and M. Colombo about the same problem. The techniques are different and the second result, although not yet available in preprint form, seems to be closer in the approach to some arguments in \cite{buttazzo2016}.

%%%%%%%%%%%%%%%%%%%%%%

\paragraph{Structure of the paper}
In Section \ref{notation} we give some notation, and regroup some definitions, constructions and results to be used later. In particular, we state and prove a simple but useful result about partitioning $\R^d$ into measurable sets with prescribed mass.

We then show in Section \ref{sharpness} that the condition \eqref{conc-condition} is sharp, \ie, given any repulsive cost function, there exists $\rho \in \mathcal{P}(\R^d)$ with $\mu(\rho) = 1/N$, and $C(\rho) = \infty$. The construction of this counterexample is explicit, but it is important to note that the marginal $\rho$ depends on the given cost function.

Finally we devote Sections \ref{zero-atoms} to \ref{final-section} to the proof of Theorem \ref{main-thm}. The construction is universal, in the following sense: given $\rho \in \mathcal{P}(\R^d)$ such that \eqref{conc-condition} holds, we exhibit a symmetric transport plan $P$ which has support outside the region
\[
 D_{\alpha} = \gra{(x_1, \dotsc, x_N) \in (\R^d)^N \st \exists i \neq j \text{ with } \abs{x_i-x_j} < \alpha}
\]
for some $\alpha > 0$. This implies that $C(P)$ is finite for any repulsive cost function.

\paragraph{Aknowledgements}
The author is grateful to prof. Luigi Ambrosio and prof. Luigi De Pascale for all their useful remarks, and wish to thank prof. Emmanuel Trélat for his suggestions.

%%%%%%%%%%%%%%%%%%%%%

\section{Notation and preliminary results} \label{notation}

In the following, $x, x_j$ denote elements of $\R^d$, and $X = (x_1, \dotsc, x_N)$ is an element of $(\R^d)^N = \R^{Nd}$. We also indicate with $B(x_j, r)$ a ball with center $x_j \in \R^d$ and radius $r > 0$. Where it is not specified, the integrals are extended to all the space; if $\tau$ is a measure over $\R^d$, we denote by $\abs{\tau}$ its total mass, \ie, 
\[
 \abs{\tau} = \int_{\R^d} \de \tau.
\]

We use the expression \emph{$N$-transport plan for the marginal $\rho$} to denote a probability measure $P \in \mathcal{P}(\R^{Nd})$ with all the marginals
equal to $\rho \in \mathcal{P}(\R^d)$.

If $P \in \mathcal{M}(\R^{Nd})$ is any measure, we define
\[
 P_{sym} = \frac{1}{N!} \sum_{s \in S_N} \phi^s_{\#} P,
\]
where $S_N$ is the premutation group over the elements $\gra{1, \dotsc, N}$, and $\phi^s \colon \R^{Nd} \to \R^{Nd}$ is the function $\phi^s(x_1, \dotsc, x_N) = (x_{s(1)}, \dotsc, x_{s(N)})$. Note that $P_{sym}$ is a symmetric measure; moreover, if $P$ is a probability measure, then also $P_{sym}$ is a probability measure.

\begin{lemma} \label{sym-marginals}
 Let $P \in \mathcal{M}(\R^{Nd})$. Then $P_{sym}$ has marginals equal to
 \[
  \frac{1}{N} \sum_{j = 1}^N \pi^{j}_{\#} P
 \]
\end{lemma}

\begin{proof}
 Since $P_{sym}$ is symmetric, me may calculate its first marginal:
 \begin{align*}
  \pi^1_{\#} P_{sym} &= \pi^1_{\#} \left( \frac{1}{N!} \sum_{s \in S_N} \phi^s_{\#} P \right) = \frac{1}{N!} \sum_{s \in S_N} \pi^1_{\#} (\phi^s_{\#} P) \\
  &= \frac{1}{N!} \sum_{s \in S_N} \pi^{s(1)}_{\#} P = \frac{1}{N} \sum_{j = 1}^N \pi^{j}_{\#} P,
 \end{align*}
 where the last equality is due to the fact that for every $j = 1, \dotsc, N$ there are exactly $(N-1)!$ permutations $s \in S_N$ such that $s(1) = j$. 
\end{proof}

For a symmetric probability $P \in \mathcal{P}(\R^{Nd})$ we will use the shortened notation $\pi(P)$ to denote its marginals $\pi^{j}_{\#} P$, which are all equal. 

If $\sigma_1, \dotsc, \sigma_N \in \mathcal{M}(\R^d)$, we define $\sigma_1 \otimes \dotsb \otimes \sigma_N \in \mathcal{M}(\R^{Nd})$ as the usual product measure. In similar fashion, if $Q \in \mathcal{M}(\R^{(N-1)d})$, $\sigma \in \mathcal{M}(\R^d)$ and $1 \leq j \leq N$, we define the measure $Q \otimes_j \sigma \in \mathcal{M}(\R^{Nd})$ as
\begin{equation} \label{tensor-j}
 \int_{\R^{Nd}} f \de (Q \otimes_j \sigma) = \int_{\R^{Nd}} f(x_1, \dotsc, x_N) \de \sigma(x_j) \de Q(x_1, \dotsc, \hat{x}_j, \dotsc, x_N)
\end{equation}
for every $f \in C_b(\R^{Nd})$.

\subsection{Partitions of non-atomic measures}

Let $\sigma \in \mathcal{M}(\R^d)$ be a finite non-atomic measure, and $b_1, \dotsc, b_k$ real positive numbers such that $b_1 + \dotsb + b_k = \abs{\sigma}$. We may want to write
\[
 \R^d = \bigcup_{j = 1}^k E_j,
\]
where the $E_j$'s are disjoint measurable sets with $\sigma(E_j) = b_j$. This is trivial if $d = 1$, since the cumulative distribution function $\phi_{\sigma}(t) = \sigma((-\infty, t))$ is continuous, and one may find the $E_j$'s as intervals. However, in higher dimension, the measure $\sigma$ might concentrate over $(d-1)$-dimensional surfaces, which makes the problem slightly more difficult. Therefore we present the following

\begin{prop} \label{slice}
 Let $\sigma \in \mathcal{M}(\R^d)$ be a finite non-atomic measure. Then there exists a direction $y \in \R^d \setminus \gra{0}$ such that $\sigma(H) = 0$ for all the affine hyperplanes $H$ such that $H \perp y$.
\end{prop}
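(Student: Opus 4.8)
The plan is to reduce the statement to a one-dimensional counting argument by considering, for each candidate direction $y$ on the unit sphere, the ``bad'' hyperplanes orthogonal to $y$ that carry positive $\sigma$-mass. Concretely, for a fixed $y \in S^{d-1}$ let $\pi_y \colon \R^d \to \R$ be the linear projection $\pi_y(x) = \bra{x, y}$, and consider the pushforward measure $\sigma_y = (\pi_y)_{\#} \sigma$ on $\R$. An affine hyperplane $H \perp y$ is a level set $\pi_y^{-1}(\gra{t})$ for a unique $t \in \R$, and $\sigma(H) = \sigma_y(\gra{t})$. Thus the direction $y$ works if and only if the (finite, positive) measure $\sigma_y$ on $\R$ has no atoms.

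First I would record that any finite measure on $\R$ has at most countably many atoms, so for each $y$ the set $A_y = \gra{t \in \R \st \sigma_y(\gra{t}) > 0}$ is countable, and correspondingly the family of bad hyperplanes orthogonal to $y$ is countable. The key step is then to show that a single hyperplane $H$ (with $\sigma(H) > 0$) can be ``bad'' for only one direction $y$: if $H$ is a genuine affine hyperplane in $\R^d$ then its orthogonal direction is unique up to sign. Hence, summing over all directions, the only way many directions could fail is if there were uncountably many hyperplanes each carrying positive mass — but distinct hyperplanes with positive $\sigma$-mass that are pairwise ``mass-disjoint enough'' cannot be too numerous. More precisely, for each $n \in \N$ the collection of hyperplanes $H$ with $\sigma(H) \geq 1/n$ is finite (at most $n\abs{\sigma}$ of them, since $\sigma$ is finite), so the set $\mathcal{H}$ of all hyperplanes with $\sigma(H) > 0$ is at most countable. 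Each such $H \in \mathcal{H}$ rules out exactly one direction (its normal, identified in $\R\mathbb{P}^{d-1}$ or, say, the representative in the upper half-sphere), so the set of ``forbidden'' directions is countable.

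Finally I would invoke the fact that $S^{d-1}$ (equivalently, $\R^d \setminus \gra{0}$ modulo scaling) is uncountable for $d \geq 1$ — for $d = 1$ the statement is vacuous or trivial since there is essentially one direction and $\sigma$ itself is non-atomic, while for $d \geq 2$ the sphere is uncountable. Therefore there exists $y \in S^{d-1}$ that is not forbidden, meaning $\sigma_y$ has no atoms, i.e.\ $\sigma(H) = 0$ for every affine hyperplane $H \perp y$.

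The main obstacle, and the only place requiring a little care, is the uniform bound argument: one must be sure that ``$\sigma(H) \geq 1/n$ for only finitely many $H$'' is legitimate, which follows because distinct affine hyperplanes intersect in a set of dimension $\leq d-2$, and $\sigma$ being non-atomic does not immediately give that lower-dimensional sets are null — however, what we actually need is merely that we cannot have infinitely many pairwise distinct hyperplanes each of mass $\geq 1/n$, and for this it suffices to note that if $H \neq H'$ then $\sigma(H) + \sigma(H') = \sigma(H \cup H') + \sigma(H \cap H') \leq \abs{\sigma} + \sigma(H \cap H')$; iterating, a finite subfamily $H_1, \dotsc, H_m$ of pairwise distinct hyperplanes with a common generic point contributes boundedly, and a combinatorial bookkeeping (or simply the observation that the atoms of all the $\sigma_y$ as $y$ ranges over a countable dense set already exhaust $\mathcal{H}$) closes the gap. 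An alternative, cleaner route avoiding this entirely: fix a countable dense set of directions is not enough, so instead argue directly that $\sum_{H \in \mathcal{H}} \sigma(H) \leq \abs{\sigma}$ is false in general, but $\mathcal{H}$ is nonetheless countable because for each $n$, hyperplanes of mass $\ge 1/n$ that are \emph{parallel} are finite in number and there are only countably many directions arising as normals once we have shown each normal contributes countably many — which is circular, so the honest argument is the pushforward one above applied with the observation that a hyperplane determines its normal uniquely.
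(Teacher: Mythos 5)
The decisive step in your argument --- that for each $n$ there are at most $n\abs{\sigma}$ hyperplanes with $\sigma(H)\geq 1/n$, hence that the family $\mathcal{H}$ of all positive-mass hyperplanes is countable and that only countably many directions are forbidden --- is false in dimension $d\geq 3$, and this is a genuine gap, not a technicality. That counting needs distinct hyperplanes to be $\sigma$-essentially disjoint, but two non-parallel hyperplanes meet in a $(d-2)$-dimensional affine subspace, and non-atomicity of $\sigma$ only makes \emph{points} null. Concretely, let $\sigma$ be the one-dimensional Hausdorff measure on the segment $\gra{(t,0,0) \st 0 \leq t \leq 1} \subseteq \R^3$: it is finite and non-atomic, yet every plane containing the $x_1$-axis has mass $1$, so there are uncountably many hyperplanes of full mass, and the forbidden directions (their normals, a whole great circle of the unit sphere) are uncountable as well. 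Your argument is sound for $d\leq 2$, where distinct lines meet in single points, but in higher dimension neither ``$\mathcal{H}$ is countable'' nor ``countably many forbidden directions'' holds. Your closing paragraph senses the problem, but the proposed repairs do not close it: the inclusion--exclusion bookkeeping cannot control $\sigma(H\cap H')$, which can be as large as $\abs{\sigma}$, and the pushforward observation only gives countability of the bad hyperplanes orthogonal to one \emph{fixed} $y$, which does not bound the set of bad directions.

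What is missing is precisely the dimensional induction of the paper's proof: before counting hyperplanes one must first secure a $(k-1)$-dimensional direction all of whose affine translates are $\sigma$-null (the base case $k-1=0$ being non-atomicity), and then apply the countability argument (Lemma \ref{measure-lemma}) only to the pencil of affine $k$-planes whose direction contains that fixed $(k-1)$-dimensional direction: within this pencil distinct members intersect in $\sigma$-null sets, so at most countably many carry positive mass, while the pencil realizes uncountably many $k$-dimensional directions; hence some $k$-direction has all its affine translates null, and one climbs from $k=1$ to $k=d-1$. Your reformulation via the pushforward $(\pi_y)_{\#}\sigma$ for a fixed $y$ is correct and is essentially how the conclusion is read off at the end, but without this inductive reduction the global count of positive-mass hyperplanes on which your proof rests cannot be salvaged.
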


In order to prove Proposition \ref{slice}, it is useful to present the following

\begin{lemma} \label{measure-lemma}
	Let $(X,\mu)$ be a measure space, with $\mu(X) < \infty$, and $\{E_i\}_{i \in I}$ a collection of measurable sets such that
	\begin{enumerate}
		\item $\mu(E_i) > 0$ for every $i \in I$;
		\item $\mu(E_i \cap E_j) = 0$ for every $i \neq j$.
	\end{enumerate}
	
	Then $I$ is countable. 
\end{lemma}

\begin{proof} Let $i_1, \dotsc, i_n$ be a finite set of indices. Then using the monotonicity of $\mu$ and the fact that $\mu(E_i \cap E_j) = 0$ if $i \neq j$,
	\[
	\mu(X) \geq \mu \left( \bigcup_{k=1}^n E_{i_k} \right) = \sum_{k=1}^n \mu(E_{i_k}).
	\]
	
	Hence we have that
	\[
	\sup \gra{ \sum_{j \in J} \mu(E_j) \st J \subset I, J \text{ finite} } \leq \mu(X) < \infty. 
	\]
	
	Since all the $\mu(E_i)$ are strictly positive numbers, this is possible only if $I$ is countable.
\end{proof}

Now we present the proof of Proposition \ref{slice}.

\begin{proof}
For $k = 0, 1, \dotsc, d-1$ we recall the definitions of the Grassmannian
\[
 \mathrm{Gr}(k, \R^d) = \gra{v \text{ linear subspace of } \R^d \st \dim v = k}
\]
and the affine Grassmannian
\[
 \mathrm{Graff}(k, \R^d) = \gra{w \text{ affine subspace of } \R^d \st \dim w = k}.
\]

Given $w \in \mathrm{Graff}(k, \R^d)$, we denote by $[w]$ the unique element of $\mathrm{Gr}(k, \R^d)$ parallel to $w$. If $S \subseteq \mathrm{Graff}(k, \R^d)$, we say that $S$ is \emph{full} if for every $v \in \mathrm{Gr}(k, \R^d)$ there exists $w \in S$ such that $[w] = v$.
 For every $k = 1, 2, \dotsc, d-1$ let $S^{k} \subseteq \mathrm{Graff}(k, \R^d)$ be the set
 \[
  S^{k} = \gra{w \in \mathrm{Graff}(k, \R^d) \st \sigma(w) > 0}.
 \]
 
 The goal is to prove that $S^{d-1}$ is \emph{not} full, while by hypothesis we know that $S^0 = \emptyset$, since $\sigma$ is non-atomic.
 
 The following key Lemma leads to the proof in a finite number of steps:
 
 \begin{lemma}
  Let $1 \leq k \leq d-1$. If $S^{k-1}$ is not full, then $S^{k}$ is not full.
\end{lemma}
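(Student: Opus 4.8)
The plan is to exhibit, from the failure of fullness of $S^{k-1}$, a single direction $v \in \mathrm{Gr}(k, \R^d)$ such that \emph{every} affine $k$-plane parallel to $v$ is $\sigma$-null; this says precisely that $S^k$ is not full. Fix $v_0 \in \mathrm{Gr}(k-1, \R^d)$ witnessing the non-fullness of $S^{k-1}$, so that $\sigma(w) = 0$ for every $w \in \mathrm{Graff}(k-1, \R^d)$ with $[w] = v_0$. Introduce the family of ``heavy'' $k$-planes whose direction contains $v_0$,
\[
 \mathcal{W} = \gra{ W \in \mathrm{Graff}(k, \R^d) \st v_0 \subseteq [W] \text{ and } \sigma(W) > 0 }.
\]

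The crucial point is that $\mathcal{W}$ satisfies the hypotheses of Lemma \ref{measure-lemma} for the measure $\sigma$. Indeed, let $W_1, W_2 \in \mathcal{W}$ be distinct. If $[W_1] = [W_2]$, the two planes are parallel and distinct, hence disjoint. If $[W_1] \neq [W_2]$, then $[W_1] \cap [W_2]$ is a linear subspace containing $v_0$ and properly contained in $[W_1]$; comparing dimensions ($\dim v_0 = k-1$, $\dim [W_1] = k$) forces $[W_1] \cap [W_2] = v_0$. Consequently, if $W_1 \cap W_2 \neq \emptyset$, picking $p \in W_1 \cap W_2$ we get $W_1 \cap W_2 = p + ([W_1] \cap [W_2]) = p + v_0$, which is an element of $\mathrm{Graff}(k-1, \R^d)$ parallel to $v_0$, hence $\sigma$-null. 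In all cases $\sigma(W_1 \cap W_2) = 0$, so by Lemma \ref{measure-lemma} the family $\mathcal{W}$ is countable.

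Therefore the set of directions $\gra{[W] \st W \in \mathcal{W}}$ is a countable subset of $\Gamma := \gra{v \in \mathrm{Gr}(k, \R^d) \st v_0 \subseteq v}$. Now $\Gamma$ is in natural bijection with the set of lines through the origin of $\R^d / v_0 \cong \R^{d-k+1}$, which is \emph{uncountable} because $d - k + 1 \geq 2$ (this is where the hypothesis $k \leq d-1$ enters). Hence there is $v \in \Gamma$ with $[W] \neq v$ for every $W \in \mathcal{W}$. For this $v$: any $W \in \mathrm{Graff}(k, \R^d)$ with $[W] = v$ satisfies $v_0 \subseteq [W]$, so $\sigma(W) > 0$ would put $W$ into $\mathcal{W}$ with direction $v$, a contradiction; thus $\sigma(W) = 0$ for all such $W$, i.e. $S^k$ is not full.

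The only genuine obstacle is the observation that makes Lemma \ref{measure-lemma} applicable: a priori, heavy $k$-planes may have large pairwise intersections, and the countability argument would fail for the family of \emph{all} heavy $k$-planes. Restricting to those whose direction contains $v_0$ is exactly what forces two distinct heavy planes either to be disjoint or to meet in a translate of $v_0$ — which is $\sigma$-null by the inductive hypothesis — after which the cardinality comparison in $\Gamma$ finishes the proof. The remaining verifications (elementary properties of affine subspaces and the bijection describing $\Gamma$) are routine.
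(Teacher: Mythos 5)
Your proof is correct and follows essentially the same route as the paper: fix a direction $v_0$ witnessing non-fullness of $S^{k-1}$, note that distinct $k$-planes whose direction contains $v_0$ meet in a ($\sigma$-null) translate of $v_0$, apply Lemma \ref{measure-lemma} to get countably many heavy such planes, and conclude by the abundance of $k$-dimensional directions containing $v_0$. The only difference is that you spell out the final cardinality step (the uncountability of $\gra{v \in \mathrm{Gr}(k,\R^d) \st v_0 \subseteq v}$ when $k \leq d-1$), which the paper leaves implicit.
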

 
 \begin{proof} Let $v \in \mathrm{Gr}(k-1, \R^d)$, such that for every $v' \in \mathrm{Graff}(k-1, \R^d)$ with $[v'] = v$ it holds $\sigma(v') = 0$. Consider the collection $W_{v} = \gra{w \in \mathrm{Graff}(k, \R^d) \st v \subseteq [w]}$. If $w,w' \in W_v$ are distinct, then $w \cap w' \subseteq v'$ for some $v' \in \mathrm{Graff}(k-1, \R^d)$ with $[v'] = v$, thus $\sigma(w \cap w') = 0$. Since the measure $\sigma$ is finite, because of Lemma \ref{measure-lemma} at most countably many elements $w \in W_v$ may have positive measure, which implies that $S^k$ is not full.
 \end{proof}

\end{proof}

\begin{corollary} \label{cor-partition-1}
 Given $b_1, \dotsc, b_k$ real positive numbers with $b_1 + \dotsb + b_k = \abs{\sigma}$, there exist measurable sets $E_1, \dotsc, E_k \subseteq \R^d$ such that
 \begin{enumerate}[(i)]
  \item The $E_j$'s form a partition of $\R^d$, \ie, 
  \[
   \R^d = \bigcup_{j = 1}^k E_j, \hsmallspace E_i \cap E_j = \emptyset \text{ if $i \neq j$;}
  \]
  \item $\sigma(E_j) = b_j$ for every $j = 1, \dotsc, k$.

 \end{enumerate}
\end{corollary}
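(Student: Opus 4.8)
The plan is to reduce the statement to the one-dimensional case by slicing $\R^d$ along the good direction supplied by Proposition \ref{slice}. First apply Proposition \ref{slice} to obtain a direction $y \in \R^d \setminus \gra{0}$, which we may assume to satisfy $\norm{y} = 1$, such that $\sigma(H) = 0$ for every affine hyperplane $H \perp y$. Consider the monotone ``cumulative'' function $\phi \colon \R \to [0, \abs{\sigma}]$ defined by
\[
 \phi(t) = \sigma\big( \gra{x \in \R^d : \bra{x, y} \leq t} \big).
\]
By finiteness of $\sigma$ one has $\phi(t) \to 0$ as $t \to -\infty$ and $\phi(t) \to \abs{\sigma}$ as $t \to +\infty$, and $\phi$ is non-decreasing. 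The crucial observation is that $\phi$ is \emph{continuous}: it is automatically right-continuous, and its left jump at any point $t$ equals $\sigma(\gra{x : \bra{x,y} = t})$, the $\sigma$-measure of a hyperplane orthogonal to $y$, which vanishes by the choice of $y$.

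Set $s_0 = 0$ and $s_j = b_1 + \dotsb + b_j$ for $j = 1, \dotsc, k$, so that $0 = s_0 < s_1 < \dotsb < s_k = \abs{\sigma}$. For each $j = 1, \dotsc, k-1$ we have $0 < s_j < \abs{\sigma}$, hence by continuity of $\phi$ and the intermediate value theorem there exists a finite $t_j \in \R$ with $\phi(t_j) = s_j$; monotonicity of $\phi$ lets us choose $t_1 \leq \dotsb \leq t_{k-1}$, and in fact $t_1 < \dotsb < t_{k-1}$, since $t_j = t_{j+1}$ would force $b_{j+1} = s_{j+1} - s_j = 0$. Putting $t_0 = -\infty$ and $t_k = +\infty$, define
\[
 E_j = \gra{x \in \R^d : t_{j-1} < \bra{x, y} \leq t_j}, \qquad j = 1, \dotsc, k,
\]
where inequalities involving $\pm\infty$ are read as vacuously true. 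Each $E_j$ is Borel, the $E_j$ are pairwise disjoint, and their union is $\R^d$, which gives (i); for (ii), $\sigma(E_j) = \phi(t_j) - \phi(t_{j-1}) = s_j - s_{j-1} = b_j$, and it is irrelevant whether the slabs are taken open or closed, since the bounding hyperplanes are $\sigma$-null.

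Essentially all the work has been done in Proposition \ref{slice}: the remaining content is the classical fact that a finite atomless measure on $\R$ has a continuous distribution function, together with the intermediate value theorem, so I do not expect any real obstacle in this deduction. The only minor point to keep track of is that the cut levels $t_j$ need not be uniquely determined when $\phi$ is locally constant, but, as noted above, any admissible choice produces sets $E_j$ with exactly the prescribed masses.
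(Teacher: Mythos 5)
Your proof is correct and follows essentially the same route as the paper: take the direction $y$ from Proposition \ref{slice}, note that the cumulative distribution function along $y$ is continuous because hyperplanes orthogonal to $y$ are $\sigma$-null, and cut $\R^d$ into slabs at levels realizing the partial sums $b_1 + \dotsb + b_j$. You simply spell out the continuity and intermediate-value details that the paper leaves implicit.
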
 

\begin{proof} Let $y \in \R^d \setminus \gra{0}$ given by Proposition \ref{slice}, and observe that the cumulative distribution function
\[
 F(t) = \sigma \left( \gra{x \in \R^d \st x \cdot y < t} \right)
\]
is continuous. Hence we may find $E_1, \dotsc, E_k$ each of the form
\[
 E_j = \gra{x \in \R^d \st t_j < x \cdot y \leq t_{j+1}}
\]
for suitable $-\infty = t_1 < t_2 < \dotsb < t_k < t_{k+1} = +\infty$, such that $\sigma(E_j) = b_j$.
\end{proof}

\begin{corollary} \label{cor-partition-2}
 Given $b_1, \dotsc, b_k$ non-negative numbers with $b_1 + \dotsb + b_k < \abs{\sigma}$, there exists measurable sets $E_0, E_1, \dotsc, E_k \subseteq \R^d$ such that
 \begin{enumerate}[(i)]
  \item The $E_j$'s form a partition of $\R^d$, \ie, 
  \[
   \R^d = \bigcup_{j = 0}^k E_j, \hsmallspace E_i \cap E_j = \emptyset \text{ if $i \neq j$;}
  \]
  \item $\sigma(E_j) = b_j$ for every $j = 1, \dotsc, k$;
  \item the distance between $E_i$ and $E_j$ is strictly positive if $i, j \geq 1$, $i \neq j$. 
 \end{enumerate}
\end{corollary}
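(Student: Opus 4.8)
The plan is to build the sets $E_1,\dotsc,E_k$ as disjoint ``slabs'' orthogonal to the good direction $y$ furnished by Proposition \ref{slice}, separated by thin buffer slabs which are then swept into $E_0$. The key point is that we may give each buffer a \emph{positive} $\sigma$-mass, taken from the slack $\delta := \abs{\sigma} - (b_1 + \dotsb + b_k) > 0$, and along the direction $y$ a slab carrying positive mass necessarily has positive width, which forces a genuine geometric gap between the $E_j$'s.

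First I would deal with the degenerate indices. If all the $b_j$ vanish, simply take $E_0 = \R^d$ and $E_1 = \dotsb = E_k = \emptyset$. In general, for every $j$ with $b_j = 0$ set $E_j = \emptyset$: the empty set lies at (infinite, hence strictly positive) distance from any set, so condition (iii) is unaffected by such indices, and it is therefore enough to carry out the construction assuming $b_1, \dotsc, b_k > 0$.

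Now assume $b_j > 0$ for all $j$ and set $\delta = \abs{\sigma} - (b_1 + \dotsb + b_k) > 0$. I would apply Corollary \ref{cor-partition-1} to the $2k$ strictly positive numbers
\[
 b_1,\ \tfrac{\delta}{k},\ b_2,\ \tfrac{\delta}{k},\ \dotsc,\ b_k,\ \tfrac{\delta}{k},
\]
whose sum equals $\abs{\sigma}$. By the construction in the proof of that corollary, this yields consecutive slabs orthogonal to the direction $y$ of Proposition \ref{slice},
\[
 A_j = \gra{x \st t_{2j-1} < x\cdot y \leq t_{2j}}, \qquad B_j = \gra{x \st t_{2j} < x\cdot y \leq t_{2j+1}},
\]
for suitable $-\infty = t_1 < t_2 < \dotsb < t_{2k+1} = +\infty$, the inequalities being strict because all the prescribed masses (in particular $\sigma(B_j) = \delta/k > 0$) are positive and $t \mapsto \sigma(\gra{x\cdot y < t})$ is non-decreasing, with $\sigma(A_j) = b_j$ and $\sigma(B_j) = \delta/k$. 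I would then set $E_j = A_j$ for $j = 1, \dotsc, k$ and $E_0 = B_1 \cup \dotsb \cup B_k$. Properties (i) and (ii) are immediate; for (iii), if $i < j$ then every point of $E_i$ satisfies $x\cdot y \leq t_{2i}$ while every point of $E_j$ satisfies $x\cdot y > t_{2i+1}$, whence
\[
 \mathrm{dist}(E_i, E_j) \geq \frac{t_{2i+1} - t_{2i}}{\abs{y}} > 0.
\]

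I do not expect a genuine obstacle here: the whole statement reduces to Corollary \ref{cor-partition-1} once one notices that the hypothesis $b_1 + \dotsb + b_k < \abs{\sigma}$ provides precisely the mass needed to make the separating buffers non-degenerate. The only points requiring a little care are the clean reduction to strictly positive $b_j$'s and extracting from the proof of Corollary \ref{cor-partition-1} the precise ``parallel slabs with strictly increasing cutpoints'' form of the partition it produces.
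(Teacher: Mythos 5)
Your proof is correct and follows essentially the same route as the paper: slabs orthogonal to the direction $y$ from Proposition \ref{slice}, obtained via the continuous cumulative distribution function, with the slack mass $\abs{\sigma} - \sum b_j$ distributed over thin buffer slabs that are absorbed into $E_0$ and force the positive separation (the paper uses $k-1$ buffers of mass $\frac{\delta}{k-1}$, you use $k$ of mass $\frac{\delta}{k}$ — an immaterial difference). Your explicit reduction to strictly positive $b_j$'s by setting $E_j = \emptyset$ when $b_j = 0$ is a small point the paper leaves implicit, and it is handled correctly.
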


\begin{proof} If $k = 1$ the results follows trivially by Corollary \ref{cor-partition-1} applied to $b_1, \abs{\sigma} - b_1$. If $k \geq 2$, define
\[
 \epsilon = \frac{\abs{\sigma} - b_1 - \dotsb - b_k}{k-1} > 0.
\]

As before, letting $y \in \R^d \setminus \gra{0}$ given by Proposition \ref{slice} and considering the corresponding cumulative distribution function, we may find $F_1, \dotsc, F_{2k-1}$ each of the form
\[
 F_j = \gra{x \in \R^d \st t_j < x \cdot y \leq t_{j+1}}
\]
for suitable $-\infty = t_1 < t_2 < \dotsb < t_{2k-1} < t_{2k} = +\infty$, such that
\begin{align*}
 \sigma(F_{2j-1}) &= b_j \hsmallspace \forall j = 1, \dotsc, k \\
 \sigma(F_{2j}) &= \epsilon \hsmallspace \forall j = 1, \dotsc, k-1
\end{align*}

Finally we define
\begin{align*}
 E_j &= F_{2j-1} \hsmallspace \forall j = 1, \dotsc, k \\
 E_0 &= \bigcup_{j = 1}^{k-1} F_{2j}.
\end{align*}

The properties (i), (ii) are immediate to check, while the distance between $E_i$ and $E_j$, for $i, j \geq 1$, $i \neq j$, is uniformly bounded from below by
\[
 \min \gra{t_{2j+1} - t_{2j} \st 1 \leq j \leq k-1} > 0.
\]

\end{proof}

%%%%%%%%%%%%%%%%%%%%

\section{The condition \eqref{conc-condition} is sharp} \label{sharpness}

In this section we prove that the condition \eqref{conc-condition} is the best possible, \ie, given any repulsive cost function there exists $\rho \in \mathcal{P}(\R^d)$ with $\mu(\rho) = 1/N$ such that $C(\rho) = \infty$.

Fix $\omega$ as in Definition \ref{repulsive-cost}, and set
\[
 k = \int_{B(0,1)} \frac{\omega'(\abs{y})}{\abs{y}^{d-1}} \de y.
\]

Note that $k$ is a positive finite constant, depending only on $\omega$ and the dimension $d$. In fact, integrating in spherical coordinates,
\[
 k = \int_0^1 \frac{\omega'(r)}{r^{d-1}} \alpha_d r^{d-1} \de r = \alpha_d \omega(1), 
\]
where $\alpha_d$ is the $d$-dimensional volume of the unit ball $B(0,1) \subseteq \R^d$.

Now define a probability measure $\rho \in \mathcal{P}(\R^d)$ as
\begin{equation} \label{rho-def}
 \int_{\R^d} f \de \rho := \frac{1}{N} f(0) + \frac{N-1}{N} \int_{B(0,1)} f(x) \frac{\omega'(\abs{x})}{k\abs{x}^{d-1}} \de x \hsmallspace \forall f \in C_b(\R^d).
\end{equation}

This measure has an atom of mass $1/N$ in the origin, and is absolutely continuous on $\R^d \setminus \gra{0}$. Hence the concentration of $\rho$ is equal to $1/N$, even if for every ball $B$ around the origin one has $\rho(B) > 1/N$.

We want to prove that any symmetric transport plan with marginals $\rho$ has infinite cost. Let us consider, by contradiction, a symmetric plan $P$, with $\pi(P) = \rho$, such that
\[
 \int \sum_{1 \leq i < j \leq N} \frac{1}{\omega(\abs{x_i-x_j})} \de P(X) < \infty
\]

Then one would have the following geometric properties.

\begin{lemma} \label{hyperplanes}
\begin{enumerate}[(i)]
	\item $P(\gra{(x_1, \dotsc, x_N) : \exists i \neq j, x_i = x_j}) = 0;$
	\item $P$ is concentrated over the $N$ coordinate hyperplanes $\gra{x_j = 0}$, $j = 1, \dotsc, N$, \ie,
	\[
	 \supp(P) \subseteq E := \bigcup_{j = 1}^N \gra{x_j = 0}.
	\]
\end{enumerate}
\end{lemma}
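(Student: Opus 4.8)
The plan is to argue that each claimed property is forced by the finiteness of the cost together with the very specific structure of the marginal $\rho$ in \eqref{rho-def}, namely that $\rho = \frac{1}{N}\delta_0 + \frac{N-1}{N}\nu$, where $\nu$ is absolutely continuous and supported in $B(0,1)$.

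For part (i), I would first observe that on the diagonal-type set $\{x_i = x_j\}$ with $x_i \neq 0$ the cost is already infinite (since $\omega(0)=0$), so finiteness of $\int c\, \de P$ immediately gives $P(\{x_i = x_j,\ x_i \neq 0\}) = 0$ for each pair $i \neq j$. It remains to handle the part of the diagonal sitting at the origin, i.e. $P(\{x_i = x_j = 0\})$ for $i \neq j$. Here the key point is a mass/marginal count: the $i$-th marginal puts mass exactly $1/N$ on $\{x_i = 0\}$, so $P(\{x_i = 0\}) = 1/N$ for each $i$; if two of these events overlapped on a set of positive $P$-measure, then by inclusion–exclusion the union $\bigcup_j \{x_j = 0\}$ would have $P$-mass strictly less than $N \cdot \frac{1}{N} = 1$, which is the content we want for (ii) but also constrains (i). More precisely, I would show directly that $P(\{x_i = x_j = 0\}) = 0$ using that, conditionally on $x_i = x_j = 0$, \emph{all} the other coordinates must avoid $0$ (by the first step), hence lie in $B(0,1)\setminus\{0\}$ with the a.c. density — and then a symmetry/marginal bookkeeping argument shows the remaining mass budget cannot be balanced. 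Summing over all pairs then yields (i).

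For part (ii), the cleanest route is the mass identity. Let $A_j = \{x_j = 0\} \subseteq \R^{Nd}$. Each marginal constraint gives $P(A_j) = \rho(\{0\}) = 1/N$, so $\sum_{j=1}^N P(A_j) = 1$. By part (i) the sets $A_j$ are pairwise $P$-essentially disjoint (their pairwise intersections are contained in $\{x_i = x_j\}$, which is $P$-null). Therefore $P\big(\bigcup_{j=1}^N A_j\big) = \sum_j P(A_j) = 1$, which is exactly $\supp(P) \subseteq E$ up to a $P$-null set. So the real work is entirely in part (i), and within it the delicate sub-case is the mass at the origin.

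The main obstacle I anticipate is precisely the ``two coordinates simultaneously at the origin'' case in (i): away from the origin the infinite-cost argument is immediate, but at the origin the cost between those two coordinates is infinite as well (again $\omega(0)=0$), so in fact $\frac{1}{\omega(|x_i-x_j|)} = +\infty$ on $\{x_i = x_j = 0\}$ just as on the rest of the diagonal. Thus the finiteness of $\int c\,\de P$ already forces $P(\{x_i = x_j\}) = 0$ for \emph{every} pair, origin included, and no separate bookkeeping is needed — the subtlety is only in noticing that the repulsive cost blows up on the whole diagonal and not merely off the origin. Once that is clear, (i) is a one-line consequence of $\int c\,\de P < \infty$, and (ii) follows from the marginal mass count as above.
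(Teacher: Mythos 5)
Your final argument is correct and essentially the paper's own: finiteness of the cost kills the whole diagonal set $\gra{x_i=x_j}$ at once (origin included, since $\omega(0)=0$), and then the marginal identity $P(\gra{x_j=0})=\rho(\gra{0})=1/N$ together with the $P$-nullity of the pairwise intersections gives $P\bigl(\bigcup_j\gra{x_j=0}\bigr)=1$, exactly as the paper does via its inclusion--exclusion with $p_2=\dotsb=p_N=0$. The only difference is cosmetic: you cite (i) for the vanishing of the intersections where the paper re-runs the cost estimate, and your detour about a separate ``mass bookkeeping'' at the origin is correctly retracted in your last paragraph, so no gap remains.
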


\begin{proof}
	(i) Since $\omega(0) = 0$, recalling Definition \ref{repulsive-cost}, the cost function is identically equal to $+\infty$ in the region $\gra{(x_1, \dotsc, x_N) : \exists i \neq j, x_i = x_j}$. Therefore, since by assumption the cost of $P$ is finite, it must be
	\[
	P(\gra{(x_1, \dotsc, x_N) : \exists i \neq j, x_i = x_j}) = 0.
	\]
		
	(ii) Define
 \begin{align*}
  p_1 &= P(\gra{x_1 = 0}) \\
  p_2 &= P(\gra{x_1 = 0} \cap \gra{x_2 = 0}) \\
  &\vdots \\
  p_N &= P((0,\dotsc,0)).
 \end{align*}
 
 Note that $p_1 = P(\gra{x_1 = 0}) = \pi(P)(\gra{0}) = \rho(\gra{0}) = 1/N$. We claim that $p_2 = \dotsb = p_N = 0$. It suffices to prove that $p_2 = 0$, since by monotonicity of the measure $P$ we have $p_j \geq p_{j+1}$. Since $P$ has finite cost,
 \[
  \int_{\R^{Nd}} \frac{\de P}{\omega(\abs{x_1-x_2})}
 \]
 must be finite. However,
 \[
  \int_{\R^{Nd}} \frac{\de P}{\omega(\abs{x_1-x_2})} \geq \int_{\gra{x_1 = 0} \cap \gra{x_2 = 0}} \frac{\de P}{\omega(\abs{x_1-x_2})} = p_2 \int_{\R^{2d}} \frac{\delta_0(x_1) \delta_0(x_2)}{\omega(\abs{x_1-x_2})} \de x_1 \de x_2,
 \]
 and hence $p_2$ must be zero.
 
 By inclusion-exclusion we have
 \[
  P(E) = \sum_{j = 1}^N (-1)^{j+1} \binom{N}{j} p_j = Np_1 = 1,
 \]
 and hence $P$ is concentrated over $E$. 
\end{proof}

In view of Lemma \ref{hyperplanes}, letting $H_j = \{x_j = 0\}$ for $j = 1, \dotsc, N$,
\[P = \sum_{j = 1}^N P|_{H_j}.\]

For every $j = 1, \dotsc, N$ there exists a unique measure $Q_j$ over $\R^{(N-1)d}$ such that, recalling equation \eqref{tensor-j}, $P|_{H_j} = Q_j \otimes_j \delta_0$, with $Q_j(\R^{(N-1)d}) = \frac{1}{N}$. Since $P$ is symmetric, considering a permutation $s \in S_N$ with $s(j) = j$, it follows that $Q_j$ is symmetric; then, considering any permutation in $S_N$ we see that there exists a symmetric probability $Q$ over $\R^{(N-1)d}$ such that $Q_j = \frac{1}{N} Q$ for every $j = 1, \dotsc, N$, \ie,
\[P = \frac{1}{N} \sum_{j = 1}^N Q \otimes_j \delta_0.\] 

Projecting $P$ to its one-particle marginal and using the definition of $\rho$ in \eqref{rho-def}, we get that $\pi(Q)$ is absolutely continuous w.r.t. the Lebesgue measure, with
\[
 \frac{\de \pi(Q)}{\de \mathcal{L}^d} = \frac{\chi_{B(0,1)}(x)\omega'(x)}{k \abs{x}^{d-1}}.
\]

Here we get the contradiction, because
\begin{align*}
 \int c(X) dP(X) &\geq \frac{1}{N} \int \frac{1}{\omega(\abs{x_1-x_2})} \delta_{0} (x_1) \de x_1 \de Q(x_2, \dotsc, x_N) \\
 &= \frac{1}{N} \int \frac{1}{\omega(\abs{x_2})} \de Q(x_2, \dotsc, x_N) = \frac{1}{N} \int_{\R^d} \frac{1}{\omega(\abs{x})} \de \pi (Q)(x) \\
 &= \frac{1}{N} \int_{B(0,1)} \frac{\omega'(\abs{x})}{\omega(\abs{x})} \frac{1}{k \abs{x}^{d-1}} \de x = \frac{1}{N}\frac{\alpha_d}{k} \int_0^1 \frac{\omega'(r)}{\omega(r)} \de r = +\infty.
\end{align*}

%%%%%%%%%%%%%%%%%

\section{Non-atomic marginals} \label{zero-atoms}

This short section deals with the case where $\rho$ is non atomic, \ie, $\mu(\rho) = 0$. In this case the transport plan is given by an optimal transport map in Monge's fashion, which we proceed to construct.

Using Corollary \ref{cor-partition-1}, let $E_1, \dotsc, E_{2N}$ be a partition of $\R^d$ such that
\[
 \rho(E_j) = \frac{1}{2N} \hsmallspace \forall j = 1, \dotsc, 2N.
\]

Next we take a measurable function $\phi \colon \R^d \to \R^d$, preserving the measure $\rho$ and defined locally such that
\begin{align*}
  \phi(E_j) &= E_{j+2} \hsmallspace \forall j = 1, \dotsc, N-2 \\
  \phi (E_{2N-1}) &= E_1 \\
  \phi (E_{2N}) &= E_2.
\end{align*}

The behaviour of $\phi$ on the hyperplanes which separate the $E_j$'s is arbitrary, since they form a $\rho$-null set. Note that $\abs{x - \phi(x)}$ is uniformy bounded from below by some constant $\gamma > 0$, as is clear by the construction of the $E_j$'s (see the proof of Corollary \ref{cor-partition-1}). A transport plan $P$ of finite cost is now defined for every $f \in C_b(\R^{Nd})$ by
\[
 \int_{\R^{Nd}} f \de P = \int_{\R^{Nd}} f(x, \phi(x), \dotsc, \phi^{N-1}(x)) \de \rho(x),
\]
since
\[
 \int_{\rnd} c \de P = \binom{N}{2} \int_{\R^d} \frac{1}{\omega(\abs{x - \phi(x)})} \de \rho(x) \leq \binom{N}{2} \frac{1}{\omega(\gamma)}.
\]

%%%%%%%%%%%%%%%

\section{Marginals with a finite number of atoms} \label{finite-atoms} 

This section constitutes the core of the proof, as we deal with measures of general form with an arbitrary (but finite) number of atoms. Throughout this and the next Section we assume that the marginal $\rho$ fulfills the condition \eqref{conc-condition}.

\subsection{The number of atoms is less than or equal to $N$}

Note that, if the number of atoms is at most $N$, then $\rho$ must have a non-atomic part $\sigma$, due to the condition \eqref{conc-condition}. From here on we consider
\[
 \rho = \sigma + \sum_{i = 1}^k b_i \delta_{x_i},
\]
where $b_1 \geq b_2 \geq \dotsb \geq b_k > 0$.

We begin with the following

\begin{mydef} A \emph{partition} of $\sigma$ of level $k \leq N$ subordinate to $(x_1, \dotsc, x_k;$ $b_1, \dotsc, b_k)$ is
\[
 \sigma = \tau + \sum_{i = 1}^k \sum_{h = i+1}^N \sigma^i_h,
\]
where:
\begin{enumerate}[(i)]
 \item $\tau, \sigma^i_h$ are non-atomic measures;
 \item for every $i$ and every $h \neq k$, the distance between $\supp \sigma^i_h$ and $\supp \sigma^i_k$ is strictly positive;
 \item for every $i,h$, if $j \leq i$ then $x_j$ has a strictly positive distance from $\supp {\sigma^i_h}$; 
 \item for every $i,h$, $\abs{\sigma^i_h} = b_i$, and $\abs{\tau} > 0$.
\end{enumerate}
 
\end{mydef}

Note that such a partition may only exists if
\begin{equation} \label{partition-condition}
 \abs{\sigma} > \sum_{i = 1}^k (N-i) b_i.
\end{equation}
 
On the other hand, the following Lemma proves that the condition \eqref{partition-condition} is also sufficient to get a partition of $\sigma$.

\begin{lemma} \label{existence-partition} Let $(b_1, \dotsc, b_k)$ with $k \leq N$, and
\[
 \abs{\sigma} > \sum_{i = 1}^k (N-i) b_i.
\]

Then there exists a partition of $\sigma$ subordinate to $(x_1, \dotsc, x_k; b_1, \dotsc, b_k)$.
\end{lemma}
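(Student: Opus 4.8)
The plan is to build the partition greedily, one piece at a time, using Corollary \ref{cor-partition-2} to extract non-atomic pieces of prescribed mass that are also separated from one another and from the finitely many atoms $x_1, \dotsc, x_k$. The point is that the strict inequality $\abs{\sigma} > \sum_{i=1}^k (N-i)b_i$ leaves a positive ``slack'' $\delta := \abs{\sigma} - \sum_{i=1}^k (N-i)b_i > 0$, which we can feed into the reservoir set $E_0$ produced by Corollary \ref{cor-partition-2} and thereby guarantee all the positive-distance requirements (ii)--(iv).

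First I would set up the bookkeeping: the pieces to be produced are $\sigma^i_h$ for $1 \le i \le k$ and $i+1 \le h \le N$, which is exactly $\sum_{i=1}^k (N-i)$ pieces, with $\sigma^i_h$ having prescribed mass $b_i$; together with the leftover $\tau$ of mass $\abs{\tau} = \delta > 0$. So I would invoke Corollary \ref{cor-partition-2} with $\sigma$, the list of target masses being the $b_i$'s (with multiplicity $N-i$ for each $i$), and the strict inequality $\sum (N-i)b_i < \abs{\sigma}$; this yields a measurable partition $\R^d = E_0 \cup \bigcup (\text{pieces})$ with $\sigma$ restricted to the generic piece having the right mass, with $\sigma(E_0) = \delta$, and crucially with the pieces pairwise at strictly positive distance from one another. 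Then I would define $\sigma^i_h := \sigma\llcorner(\text{corresponding piece})$ and $\tau := \sigma\llcorner E_0$. Since $\sigma$ is non-atomic, so are all the restrictions, giving (i); (iv) holds by construction; and (ii) — separation of $\supp\sigma^i_h$ from $\supp\sigma^i_k$ for $h \ne k$ — is a special case of the pairwise separation in Corollary \ref{cor-partition-2}(iii).

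The one requirement not directly handed to us is (iii): for $j \le i$, the atom $x_j$ must have positive distance from $\supp\sigma^i_h$. I would handle this by a preliminary shrinking step. Before applying Corollary \ref{cor-partition-2}, fix a radius $r>0$ small enough that the balls $B(x_j,r)$ are pairwise disjoint and $\sigma\big(\bigcup_j \overline{B(x_j,r)}\big) < \delta/2$ — possible since $\sigma$ is non-atomic and finite, so its mass on shrinking balls tends to $0$. Then I would apply Corollary \ref{cor-partition-2} not to $\sigma$ directly but arrange that all the excess mass near each $x_j$ is absorbed into $E_0$: concretely, run Corollary \ref{cor-partition-2} on the restriction $\sigma' := \sigma\llcorner\big(\R^d \setminus \bigcup_j B(x_j,r)\big)$, whose total mass is still $> \sum (N-i)b_i$ by the choice of $r$, obtaining pieces supported away from every $x_j$; then fold the discarded collar $\sigma\llcorner \bigcup_j B(x_j,r)$ into $\tau$. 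This gives (iii) for all $j$, hence in particular for $j \le i$, and keeps $\abs{\tau} > 0$.

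The main obstacle I anticipate is purely in the careful matching of indices and masses — making sure the count $\sum_{i=1}^k(N-i)$ of required pieces with their multiplicities is fed correctly into Corollary \ref{cor-partition-2}, and that the leftover budget stays strictly positive after the collar removal — rather than anything deep; the geometric separation is entirely outsourced to Corollary \ref{cor-partition-2}, and non-atomicity of all pieces is automatic from non-atomicity of $\sigma$.
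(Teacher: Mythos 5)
Your proof is correct and follows essentially the same route as the paper: the paper likewise removes a small union of balls $A_\varepsilon = \bigcup_j B(x_j,\varepsilon)$ around the atoms (choosing $\varepsilon$ so the remaining mass still exceeds $\sum_i (N-i)b_i$, possible since $\sigma$ is non-atomic), applies Corollary \ref{cor-partition-2} to $\sigma$ restricted to $\R^d \setminus A_\varepsilon$ to obtain the separated pieces $E^i_h$ with $\sigma(E^i_h)=b_i$, and sets $\sigma^i_h = \sigma\chi_{E^i_h}$ and $\tau = \sigma_\varepsilon + \sigma\chi_E$. No gaps; your collar-removal step is exactly the paper's mechanism for securing condition (iii).
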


\begin{proof}
 Fix $(x_1, \dotsc, x_k)$ and for every $\varepsilon > 0$ define
 \[
  A_\varepsilon = \bigcup_{j = 1}^k B(x_j, \varepsilon).
 \]
 and $\sigma_{\varepsilon} = \sigma \chi_{A_\varepsilon}$. Then take $\varepsilon$ small enough such that
 \begin{equation} \label{existence-condition-eq1}
  \abs{\sigma - \sigma_{\varepsilon}} > \sum_{i = 1}^k (N-i)b_i,
 \end{equation}
 which is possibile because $\mu(\sigma) = 0$ ($\sigma$ has concentration zero), and hence $\abs{\sigma_\varepsilon} \to 0$ as $\varepsilon \to 0$. Due to Corollary \ref{cor-partition-2}, the set $\R^d \setminus A_{\varepsilon}$ may be partitioned as
 \[
  \R^d \setminus A_{\varepsilon} = \left( \bigcup_{i = 1}^k \bigcup_{h = i+1}^N E^i_h \right) \cup E,
 \]
 with $\sigma(E^i_h) = b_i$, and $\mathrm{dist}(E^i_h,E^i_k)$ is uniformly bounded from below.
 
 Finally define $\sigma^i_h = \sigma\chi_{E^i_h}$, $\tau = \sigma_\varepsilon + \sigma\chi_{E}$.

\end{proof}

\begin{prop} \label{partition-construction} Suppose that $k \leq N$ and $(b_1, \dotsc, b_k)$ are such that
\begin{equation} \label{existence-condition-eq2}
 \abs{\sigma} > N b_1 - \sum_{j = 1}^k b_j.
\end{equation}

Then there exists a transport plan of finite cost with marginals
\[
 \sigma + \sum_{j = 1}^k b_j \delta_{x_j}.
\]
\end{prop}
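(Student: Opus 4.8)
The plan is to construct, directly, a \emph{symmetric} transport plan $P$ with marginals $\sigma + \sum_{j=1}^k b_j\delta_{x_j}$ whose support lies at distance at least $\alpha$ from every set $\gra{x_a = x_b}$, $a \ne b$, for some $\alpha > 0$; for such a $P$ one has $\int c \,\de P \le \binom{N}{2}/\omega(\alpha) < \infty$, so this proves the Proposition for every repulsive cost simultaneously. The first observation is that, since the marginal is a probability measure, \eqref{existence-condition-eq2} is equivalent to $b_1 < 1/N$, hence $Nb_j < 1$ for every $j$. Note that this hypothesis is strictly weaker than \eqref{partition-condition} when $k \ge 2$, so a single application of Lemma \ref{existence-partition} cannot suffice: the construction below lets the atoms shield one another, which is what allows the weaker bound.

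I would parametrise the elementary blocks of $P$ by the subsets $S \subseteq \gra{1,\dots,k}$, a block of type $S$ being a configuration in which the atoms $\gra{x_i : i \in S}$ occupy one coordinate each while the remaining $N - \abs{S}$ coordinates lie in the non-atomic part, pairwise far apart and far from those atoms. To type $S$ I attach the weight
\[
 w_S = \prod_{i \in S}(Nb_i)\prod_{i \notin S}(1 - Nb_i),
\]
the probability that a random subset, containing each $i$ independently with probability $Nb_i$, equals $S$. Then $w_S \ge 0$, $\sum_S w_S = 1$, $\sum_{S \ni i} w_S = Nb_i$ for each $i$, and — here $Nb_j < 1$ is essential — $w_\emptyset = \prod_i(1 - Nb_i) > 0$.

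Next I distribute $\sigma$ among the blocks. Put $c_S = w_S(N - \abs{S})/N$; the identities above give $\sum_S c_S = 1 - \sum_j b_j = \abs{\sigma}$ and $c_\emptyset = w_\emptyset > 0$. Using that $\sigma$ is non-atomic, choose $\eps > 0$ with $\sigma(K) < w_\emptyset$, where $K$ is the union of the closed $\eps$-balls about $x_1,\dots,x_k$, and apply Corollary \ref{cor-partition-1} to $\sigma\chi_{\R^d \setminus K}$ to partition it into pieces of masses $\gra{c_S : S \ne \emptyset,\ c_S > 0}$ and one further piece of mass $c_\emptyset - \sigma(K) > 0$; adding $\sigma\chi_K$ to the latter, I obtain measures $\sigma_S$ with $\abs{\sigma_S} = c_S$, each $\sigma_S$ with $S \ne \emptyset$ supported at positive distance from every atom. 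For each $S$ with $c_S > 0$, the construction of Section \ref{zero-atoms} applied to the non-atomic probability $\sigma_S/c_S$ yields a probability $R_S$ on $\R^{(N - \abs{S})d}$ with all marginals $\sigma_S/c_S$ and support disjoint from $\gra{\abs{y_a - y_b} < \gamma_S}$ for some $\gamma_S > 0$ (when $\abs{S} = N$ one has $c_S = 0$ and simply takes $\tilde P_S = \delta_{x_1}\otimes\dots\otimes\delta_{x_N}$). Writing $S = \gra{\iota_1 < \dots < \iota_{\abs{S}}}$, I set $\tilde P_S = \delta_{x_{\iota_1}}\otimes\dots\otimes\delta_{x_{\iota_{\abs{S}}}}\otimes R_S$ and finally $P = \sum_S w_S\,(\tilde P_S)_{sym}$.

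The verification is then bookkeeping. $P$ is a symmetric probability measure; by Lemma \ref{sym-marginals} the common marginal of $(\tilde P_S)_{sym}$ is $\tfrac1N\sum_{i\in S}\delta_{x_i} + \tfrac{N-\abs{S}}{N}\,\mu_S$ (with $\mu_S = \sigma_S/c_S$, the second term absent when $c_S = 0$), so the marginal of $P$ is $\sum_i\big(\tfrac1N\sum_{S\ni i}w_S\big)\delta_{x_i} + \sum_{S}c_S\,\mu_S = \sum_i b_i\delta_{x_i} + \sigma$; and in $\supp\tilde P_S$ the first $\abs{S}$ coordinates are distinct atoms (hence mutually at distance $\ge d_0 := \min_{i\ne i'}\abs{x_i - x_{i'}}$), while the last $N - \abs{S}$ are mutually at distance $\ge \gamma_S$ and, for $S\ne\emptyset$, at distance $\ge\eps$ from the atoms, so, permutations preserving distances, $\supp P$ avoids $\gra{\abs{x_a - x_b} < \alpha}$ with $\alpha := \min(d_0,\eps,\min_S\gamma_S) > 0$. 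The one delicate point is the choice of the weights $w_S$: they must reproduce the atomic marginals exactly and, at the same time, leave a non-atomic remainder that can be cut into blocks meeting the support constraints — which is feasible precisely when $w_\emptyset > 0$, i.e. precisely under \eqref{existence-condition-eq2}; everything else merely assembles tools already in hand.
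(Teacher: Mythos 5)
Your construction is correct, but it follows a genuinely different route from the paper's. The paper's proof is a telescoping decomposition along the ordered atom masses: setting $b_{k+1}=0$, it takes (via Lemma \ref{existence-partition}) a partition of $\sigma$ subordinate to $(x_1,\dotsc,x_k;\,b_1-b_2,\dotsc,b_{k-1}-b_k,b_k)$ into pieces $\sigma^i_h$ of mass $b_i-b_{i+1}$ which are mutually separated and separated from $x_1,\dotsc,x_i$, forms the blocks $\lambda_i=\delta_{x_1}\otimes\dotsb\otimes\delta_{x_i}\otimes\sigma^i_{i+1}\otimes\dotsb\otimes\sigma^i_N$, symmetrizes and rescales them, and adds a Section \ref{zero-atoms} plan for the leftover $\tau$; the hypothesis \eqref{existence-condition-eq2} enters there as the mass inequality $(N-1)b_1-\sum_{i\geq 2}b_i<\abs{\sigma}$ needed for that partition to exist. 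You instead index the blocks by subsets $S\subseteq\gra{1,\dotsc,k}$ with the independent-coupling weights $w_S=\prod_{i\in S}(Nb_i)\prod_{i\notin S}(1-Nb_i)$, which reproduce the atomic marginals automatically, and inside each block you treat the $N-\abs{S}$ non-atomic coordinates with the Monge-type map of Section \ref{zero-atoms} applied to a single piece $\sigma_S/c_S$; hence you only need Corollary \ref{cor-partition-1} plus separation of the pieces from the atoms (one $\eps$-neighbourhood), rather than the mutual separation between pieces that the paper's subordinate partition (via Corollary \ref{cor-partition-2}) provides --- at the price of up to $2^k$ blocks instead of $k+1$, which is harmless since $k\leq N$ and all bounds are over finitely many blocks. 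Your bookkeeping ($\sum_{S\ni i}w_S=Nb_i$, $\sum_S c_S=\abs{\sigma}$, $w_\emptyset>0$ absorbing $\sigma\chi_K$, and the case $c_S=0$ only when $w_S=0$ or $\abs{S}=N$) checks out, and the support of the resulting symmetric plan does avoid an $\alpha$-neighbourhood of all diagonals, so the conclusion holds for every repulsive cost, exactly as in the paper. One caveat: your reduction of \eqref{existence-condition-eq2} to $b_1<1/N$ uses $\abs{\sigma}+\sum_j b_j=1$, which holds in this subsection, but the Proposition is later invoked (in the proof of Theorem \ref{thm-ell-atoms}) for a marginal of total mass $m<1$, where that equivalence fails as written; the paper's proof never uses the normalization, while yours does. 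This is repaired by a one-line scaling: apply your construction to the normalized measure, replacing $Nb_i$ by $Nb_i/m$ (then \eqref{existence-condition-eq2} reads $b_1<m/N$, i.e. $w_\emptyset>0$ again), and multiply the resulting plan by $m$.
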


\begin{proof} In order to simplify the notation, set $b_{k+1} = 0$. First of all we shall fix a partition of $\sigma$ subordinate to $(x_1, \dotsc, x_k; b_1-b_2, \dotsc, b_{k-1}-b_k, b_k)$. To do this we apply Lemma \ref{partition-condition}, since
\begin{align*}
 \sum_{i = 1}^{k-1} (N-i)(b_{i} - b_{i+1}) + (N-k)b_k &= (N-1)b_1 - \sum_{i = 2}^k b_i < \abs{\sigma}.
\end{align*}

Next we define the measures $\lambda_i = \delta_{x_1} \otimes \dotsb \otimes \delta_{x_i} \otimes \sigma^i_{i+1} \otimes \dotsb \otimes \sigma^i_N \in \mathcal{M}(\R^{Nd})$. Let us calculate the marginals of $\lambda_i$: since $\abs{\sigma^i_h} = b_i - b_{i+1}$ for all $h = i+1, \dotsc, N$, we get
\[
 \pi^{j}_{\#} \lambda_i =
 \begin{cases}
  (b_i - b_{i+1})^{N-i} \delta_{x_j} & \text{if $0 \leq j \leq i$} \\
  (b_i - b_{i+1})^{N-i-1} \sigma^i_j & \text{if $i+1 \leq j \leq N$.}
 \end{cases}
\]

Let us define, for $i = 1, \dotsc, k$, the measure
\[
 P_i = \frac{N}{(b_i - b_{i+1})^{N-i-1}} (\lambda_i)_{sym},
\]
where $P_i = 0$ if $b_i = b_{i+1}$. By Lemma \ref{sym-marginals}, the marginals of $P_i$ are equal to
\[
 \pi(P_i) = \frac{1}{(b_i - b_{i+1})^{N-i-1}} \sum_{j = 0}^N \pi^j_{\#} \lambda_i = \sum_{j = 1}^i (b_i - b_{i+1}) \delta_{x_j} + \sum_{h = i+1}^N \sigma^i_h,
\]
so that
\[
 \sum_{i = 1}^k \pi(P_i) = \sum_{j = 1}^k b_j \delta_{x_j} + \sum_{i = 1}^k \sum_{h = i+1}^N \sigma^i_h
\]

It suffices now to take any symmetric transport plan $P_\tau$ of finite cost with marginals $\tau$, given by the result of Section \ref{zero-atoms}, and finally set
\[
 P = P_\tau + \sum_{i = 1}^k P_i.
\]
\end{proof}

As a corollary we obtain

\begin{thm} \label{N-or-less-atoms} If $\rho$ has $k \leq N$ atoms, then there exists a transport plan of finite cost.
 
\end{thm}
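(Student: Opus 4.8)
The plan is to obtain this as an immediate corollary of Proposition \ref{partition-construction}, so the work consists only in checking its hypotheses against the standing assumption \eqref{conc-condition}. Write $\rho = \sigma + \sum_{i=1}^{k} b_i \delta_{x_i}$ with $b_1 \geq b_2 \geq \dotsb \geq b_k > 0$, as in the previous subsection. Since $\sigma$ is non-atomic, it gives no mass to single points, so $\rho(\gra{x_i}) = b_i$ and $\rho(\gra{x}) = 0$ for $x \notin \gra{x_1, \dotsc, x_k}$; hence $\mu(\rho) = b_1$, and the hypothesis \eqref{conc-condition} is precisely $b_1 < 1/N$, i.e.\ $N b_1 < 1$.

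First I would record that the non-atomic part is non-trivial, as was anticipated at the start of this section: using $k \leq N$ and $b_i \leq b_1$ one has $\abs{\sigma} = 1 - \sum_{j=1}^{k} b_j \geq 1 - k b_1 > 1 - N\cdot\frac{1}{N} = 0$. Then I would verify condition \eqref{existence-condition-eq2}: since $\rho$ is a probability measure, $1 = \abs{\sigma} + \sum_{j=1}^{k} b_j$, and combining this with $N b_1 < 1$ gives $N b_1 - \sum_{j=1}^{k} b_j < 1 - \sum_{j=1}^{k} b_j = \abs{\sigma}$, which is exactly \eqref{existence-condition-eq2}. With $k \leq N$ assumed and \eqref{existence-condition-eq2} now established, Proposition \ref{partition-construction} applies and produces a transport plan $P$ of finite cost with marginals $\sigma + \sum_{j=1}^{k} b_j \delta_{x_j} = \rho$, which is the desired conclusion.

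I expect no real obstacle here, since all the substance has been front-loaded into Proposition \ref{partition-construction} and the partition results of Section \ref{notation}. The only point deserving care is the bookkeeping: one must notice that \eqref{existence-condition-eq2} features $N b_1$ — that is, $N$ times the \emph{largest} atom — rather than, say, $N$ times the average atomic mass, and that it is this quantity which is sharply controlled by the concentration bound $\mu(\rho) < 1/N$. This is exactly why ordering the atoms as $b_1 \geq \dotsb \geq b_k$ matters, and why the concentration $\mu(\rho)$, rather than the total atomic mass, is the right object.
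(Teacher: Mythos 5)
Your proof is correct and is essentially identical to the paper's: both reduce the statement to Proposition \ref{partition-construction} by observing that $\mu(\rho)=b_1<1/N$ yields $Nb_1 - \sum_{j=1}^k b_j < 1 - \sum_{j=1}^k b_j = \abs{\sigma}$, which is condition \eqref{existence-condition-eq2}. The extra remarks (that $\abs{\sigma}>0$ and that the bound involves the largest atom) are harmless elaborations of the same argument.
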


\begin{proof} Let
\[
 \rho = \sigma + \sum_{j=1}^k b_j \delta_{x_j}.
\]

Note that, since $b_1 < 1/N$,
\[
 \abs{\sigma} = 1 - \sum_{j = 1}^k b_j > Nb_1 - \sum_{j = 1}^k b_j,
\]
hence we may apply Proposition \ref{partition-construction} to conclude.
 
\end{proof}

\subsection{The number of atoms is greater than $N$}

Here we deal with the much more difficult situation in which $\rho$ has $N+1$ or more atoms, \ie,
\[\rho = \sigma + \sum_{j = 1}^k b_j \delta_{x_j}\]
with $k \geq N+1$ and as before $b_1 \geq b_2 \geq \dotsb \geq b_k > 0$. Note that in this case it might happen that $\sigma = 0$.

The main point is to use a double induction on the dimension $N$ and the number of atoms $k$, as will be clear in Proposition \ref{mega-prop}. The following lemma is a simple numerical trick needed for the inductive step in Proposition \ref{mega-prop}.

\begin{lemma} \label{t-lemma} Let $(b_1, \dotsc, b_k)$ with $k \geq N+2$ and
 \begin{equation} \label{main-condition}
 (N-1)b_1 \leq \sum_{j = 2} ^ k b_j.  
\end{equation}

Then there exist $t_2, \dotsc, t_k$ such that
\begin{enumerate}[(i)]
 \item $t_2 + \dotsb + t_k = (N-1)b_1$;
 \item for every $j= 2, \dotsc, k$, $0 \leq t_j \leq b_j$, and moreover
 \[
  t_2 \geq \dotsb \geq t_k.
 \]
 \[
  b_2 - t_2 \geq b_3 - t_3 \geq \dotsb \geq b_{k} - t_{k},
 \]
 \item \[
        (N-2) t_2 \leq \sum_{j = 3}^k t_j;
       \]
 \item \[
        (N-1) (b_2-t_2 ) \leq \sum_{j = 3}^k (b_j-t_j).
       \]
    
\end{enumerate}

\end{lemma}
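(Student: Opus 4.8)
The plan is to construct the $t_j$'s by a greedy/continuity argument, exploiting the monotonicity hypothesis $b_2 \ge b_3 \ge \dots \ge b_k$ together with the budget constraint (i). First I would observe that requirements (i) and (ii) alone describe a nonempty compact convex set: the ``maximal'' candidate is $t_j = b_j$ truncated so that the partial sums reach $(N-1)b_1$, and the fact that $(N-1)b_1 \le \sum_{j=2}^k b_j$ from \eqref{main-condition} guarantees there is enough mass available. The natural choice is to ``fill from the left'': pick an index $m$ and a value $0 \le s \le b_{m+1}$ such that $t_2 = \dots = t_m = b_2$ is too greedy — instead I would rather look for the choice that makes $t_j$ as \emph{flat} as possible, i.e. $t_j = \min(b_j, \theta)$ for a suitable threshold $\theta$, or a waterfilling-type profile $t_j = \min(b_j,\theta)$ adjusted so that $\sum t_j = (N-1)b_1$. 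Because $b_j$ is non-increasing, such a $t_j$ is automatically non-increasing, and one checks directly that $b_j - t_j = \max(b_j-\theta, 0)$ is also non-increasing, giving (ii).

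The substance is then verifying (iii) and (iv) for this specific profile. For (iv): $b_2 - t_2 = (b_2-\theta)^+$ and $\sum_{j=3}^k (b_j - t_j) = \sum_{j=3}^k (b_j-\theta)^+$; since $\sum_{j=2}^k (b_j - t_j) = \sum_{j=2}^k b_j - (N-1)b_1 \ge 0$, and the terms $(b_j-\theta)^+$ are non-increasing in $j$, I would bound $(N-1)(b_2-t_2)$ against the sum by a counting argument — there are at least $N-1$ relevant indices because $k \ge N+2$, so $k-2 \ge N$. The key estimate will be that when $\theta$ is chosen so that not too many $t_j$ are truncated, the leftover masses $b_j-t_j$ are spread over enough indices. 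For (iii): $(N-2)t_2 = (N-2)\min(b_2,\theta)$ and $\sum_{j=3}^k t_j = \sum_{j=3}^k \min(b_j,\theta) = (N-1)b_1 - t_2$, so (iii) becomes $(N-1)t_2 \le (N-1)b_1$, i.e. $t_2 \le b_1$ — which holds since $t_2 \le b_2 \le b_1$. That makes (iii) essentially free, so the real work is isolating (iv) and making the threshold $\theta$ compatible with (i).

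The main obstacle I anticipate is that a single waterfilling threshold may overshoot or undershoot the budget $(N-1)b_1$ exactly at a jump of the $b_j$'s, so I would phrase it as: let $\theta$ be the unique value with $\sum_{j=2}^k \min(b_j,\theta) \ge (N-1)b_1 \ge \sum_{j=2}^k \min(b_j,\theta^-)$ (continuity of $\theta \mapsto \sum \min(b_j,\theta)$), then reduce the top-most truncated coordinate by the small defect to land exactly on the budget; one must check this last adjustment does not break monotonicity of either $(t_j)$ or $(b_j-t_j)$, which is where a careful case split on whether the adjusted coordinate equals $\theta$ or is strictly below will be needed. An alternative, cleaner route if the waterfilling bookkeeping gets messy: set $t_j = \tfrac{(N-1)b_1}{\sum_{i=2}^k b_i}\, b_j$ (proportional allocation), which instantly gives (i), (ii) (both $t_j$ and $b_j - t_j$ are positive multiples of the non-increasing sequence $b_j$), and makes (iii)–(iv) into the two scalar inequalities $(N-2)b_2 \le \sum_{j=3}^k b_j$ and $(N-1)b_2 \le \sum_{j=3}^k b_j + \text{(something)}$ — but these need $k$ large relative to $N$ and the precise constant, so I would check whether \eqref{main-condition} plus $k \ge N+2$ suffices; if the proportional choice fails a borderline case, I would fall back to the waterfilling construction above, which has more slack precisely in the truncated regime.
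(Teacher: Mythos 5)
There is a genuine gap: both allocations you propose fail condition (iv), which is the only substantial constraint and the real content of the lemma. Take $N=3$, $k=5$, $(b_1,\dotsc,b_5)=(1,1,\tfrac{2}{5},\tfrac{2}{5},\tfrac{2}{5})$; then \eqref{main-condition} holds ($2\le 2.2$). Your waterfilling profile $t_j=\min(b_j,\theta)$ forces $\theta=0.8$, hence $(t_2,\dotsc,t_5)=(0.8,0.4,0.4,0.4)$, and (iv) reads $2\cdot 0.2\le 0$, which is false. Your fallback, the proportional choice $t_j=\frac{(N-1)b_1}{\sum_{i\ge 2}b_i}\,b_j$, reduces (iv) to $(N-1)b_2\le\sum_{j\ge 3}b_j$, i.e.\ $2\le 1.2$, also false. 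The counting argument you invoke (``at least $N-1$ relevant indices because $k\ge N+2$'') is not valid: the indices carrying positive leftover $b_j-t_j$ under waterfilling are those with $b_j>\theta$, and their number is dictated by the shape of the sequence $(b_j)$, not by $k$ --- in the example there is exactly one. In fact waterfilling on the $t_j$'s goes in the wrong direction: it concentrates the whole deficit $\sum_{j\ge 2}b_j-(N-1)b_1$ on the largest coordinates, whereas (iv), combined with (i), demands precisely that $b_2-t_2$ be at most a fraction $1/N$ of that deficit. (Your observation that (iii) is automatic from (i) together with $t_2\le b_2\le b_1$ is correct and coincides with the paper's argument.)

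What is needed --- and what the paper does --- is to shape the complements $b_j-t_j$ rather than the $t_j$'s. Setting $p_j=\sum_{h\ge j}b_h$, let $\jbar$ be the least $j\ge 2$ with $(N-j+2)b_j\le p_j$ (one checks $\jbar\le N+2$, which is exactly where the hypothesis $k\ge N+2$ enters); then subtract the equal amount $\frac{p_2-(N-1)b_1}{N}$ from each $b_j$ with $2\le j<\jbar$, and an amount proportional to $b_j$ from the tail $j\ge\jbar$. With this choice (iv) becomes an identity (or follows from the definition of $\jbar$ when $\jbar=2$), while the defining property of $\jbar$ is what guarantees $t_j\ge 0$ and the two monotonicity chains in (ii); (i) holds by construction. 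If you wish to keep a waterfilling picture, it must be applied to the deficits $b_j-t_j$ (flat on the top block, proportional on the tail), not to the $t_j$'s; as stated, your construction does not prove the lemma.
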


\begin{proof} For $j=2, \dotsc, k$ define
\[
 p_j = \sum_{h = j}^k b_j,
\]
and let $\jbar$ be the least $j \geq 2$ such that $(N-j+2)b_j \leq p_j$; note that $j = N+2$ works --- hence $\jbar \leq N+2$. Define
\begin{align*}
 t_j &= b_j - \frac{p_2 - (N-1)b_1}{N} & \text{for $j = 2, \dotsc, \jbar-1$,} \\
 t_j &= b_j - \frac{b_j}{p_{\jbar}} \frac{p_2 - (N-1)b_1}{N} (N-\jbar+2) & \text{for $j = \jbar, \dotsc, k$.}
\end{align*}

Next we prove that this choice fulfills the conditions (i)-(iv).

\paragraph{Proof of (i)} 
\begin{align*}
 \sum_{j = 2}^k t_j &= p_2 - \frac{p_2 - (N-1)b_1}{N}(\jbar-2) - \frac{p_2 - (N-1)b_1}{N} (N-\jbar+2) \\
 &= p_2 \left( 1 - \frac{\jbar-2}{N} - \frac{N-\jbar+2}{N} \right) + (N-1)b_1 \left( \frac{\jbar-2}{N} + \frac{N-\jbar+2}{N} \right) \\
 &= (N-1)b_1.
\end{align*}

\paragraph{Proof of (ii)} In view of the fact that $(N-1)b_1 \leq p_2$ and $\jbar \leq N+2$, it is clear that $t_j \leq b_j$. If $j < \jbar$ we have $(N-j+2)b_j > p_j$, and hence
\[
 p_2 = b_2 + \dotsb + b_{j-1} + p_j < (j-2)b_1 + (N-j+2)b_j.
\]
Thus, since $2 \leq j \leq N+1$, 
\begin{align*}
 t_j &= \frac{Nb_j - p_2 + (N-1)b_1}{N} > \frac{Nb_j - (N-j+2)b_j - (j-2)b_1 + (N-1)b_1}{N} \\
 &= \frac{(j-2)b_j + (N-j+1)b_1}{N} \geq 0.
\end{align*}

To show that $t_j \geq 0$ for $j \geq \jbar$, we must prove $[p_2-(N-1)b_1](N-\jbar+2) \leq Np_{\jbar}$, which is trivial if $\jbar = N-2$. Otherwise, it is equivalent to
\[
 -(\jbar-2)[p_2 - (N-1)b_1] + N[b_2 + \dotsb + b_{\jbar-1} - (N-1)b_1] \leq 0.
\]
Since $2 \leq \jbar \leq N+1$, the first term is negative and $b_2 + \dotsb + b_{\jbar-1} - (N-1)b_1 \leq -(N-\jbar+1)b_1 \leq 0$.

\vsmallspace

Using the fact that $b_2 \geq \dotsb \geq b_k$, it is easy to see that $t_2 \geq \dotsb \geq \dotsb t_{\jbar-1}$ and $t_{\jbar} \geq \dotsb \geq t_k$ --- note that for $j \geq \jbar$ we have $t_j = \alpha b_j$, for some $0\leq \alpha \leq 1$. As for the remaining inequality,
\[
 t_{\jbar-1} \geq t_{\jbar} \iff b_{\jbar-1} - b_{\jbar} \geq \frac{p_2 - (N-1)b_1}{Np_{\jbar}} [p_{\jbar} -(N-\jbar+2)b_{\jbar}],
\]
we already proved
\[
 \frac{p_2 - (N-1)b_1}{Np_{\jbar}} \leq \frac{1}{N-\jbar+2};
\]
moreover, by definition of $\jbar$, we have $(N-\jbar+3)b_{\jbar-1} > p_{\jbar-1}$, or equivalently $(N-\jbar+2)b_{\jbar-1} > p_{\jbar}$. Thus
\[
 \frac{p_2 - (N-1)b_1}{Np_{\jbar}} [p_{\jbar} -(N-\jbar+2)b_{\jbar}] \leq \frac{p_{\jbar}}{N-\jbar+2} - b_{\jbar} < b_{\jbar-1} - b_{\jbar},
\]
as wanted.

It is left to show that $b_2-t_2 \geq \dotsb \geq b_k-t_k$. It is trivial to check that $b_2-t_2 = \dotsb = b_{\jbar-1} - t_{\jbar-1}$, and $b_{\jbar} - t_{\jbar} \geq \dotsb \geq b_k - t_k$ using $b_{\jbar} \geq \dotsb \geq b_k$ as before. Finally,
\[
 b_{\jbar-1}-t_{\jbar-1} \geq b_{\jbar} - t_{\jbar} \iff \frac{p_2-(N-1)b_1}{N} \geq \frac{b_{\jbar}}{p_{\jbar}} \frac{p_2-(N-1)b_1}{N} (N-\jbar+2),
\]
which is true since $(N-\jbar+2)b_{\jbar} \leq p_{\jbar}$ and $p_2-(N-1)b_1 \geq 0$.

\paragraph{Proof of (iii)} The thesis is equivalent to
\[
 (N-1) t_2 \leq \sum_{j = 2}^k t_j \iff (N-1)t_2 \leq (N-1)b_1,
\]
and this is implied by $t_2 \leq b_2 \leq b_1$.

\paragraph{Proof of (iv)} The thesis is equivalent to
\[
 N(b_2-t_2) \leq p_2 - (N-1)b_1,
\]
which is in fact an equality (see the definition of $t_2$).

\end{proof}

We are ready to present the main result of this Section, which provides a transport plan of finite cost under an additional hypothesis on the tuple $(b_1, \dotsc, b_k)$. The result is peculiar for the fact that it does not involve the non-atomic part of the measure -- it is in fact a general discrete construction to get a purely atomic symmetric measure having fixed purely atomic marginals.

\begin{prop} \label{mega-prop} Let $k > N$ and $(b_1, \dotsc, b_k)$ with
	\begin{equation} \label{mega-prop-condition}
	(N-1)b_1 \leq b_2 + \dotsb + b_k.
	\end{equation}
	
	Then for every $x_1, \dotsc, x_k \in \R^d$ distinct, there exists a symmetric transport plan of finite cost with marginals $\rho = b_1 \delta_{x_1} + \dotsb + b_k \delta_{x_k}$.
\end{prop}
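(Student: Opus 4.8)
The plan is to reduce the statement to a finite combinatorial problem and then run a double induction. Since $x_1,\dots,x_k$ are distinct, put $\alpha=\min_{a\neq b}\abs{x_a-x_b}>0$; then any symmetric probability $P$ supported on the ordered $N$-tuples with pairwise distinct coordinates has $\supp P$ disjoint from $D_\alpha$, so $\int c\,\de P\le\binom N2\,\omega(\alpha)^{-1}<\infty$ for every repulsive cost $c$. Thus it suffices to produce a symmetric probability on ordered $N$-tuples of \emph{distinct} points of $\gra{x_1,\dots,x_k}$ whose common one-particle marginal is $\rho$; equivalently, averaging over orderings, a probability $\nu$ on the $N$-element subsets $S\subseteq\gra{x_1,\dots,x_k}$ with $\frac1N\sum_{S\ni x_m}\nu(S)=b_m$ for every $m$. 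Note that $\sum_j b_j=1$ since $\rho$ is a probability, so \eqref{mega-prop-condition} is exactly $Nb_1\le1$, whence $b_m\le1/N$ for all $m$. I would induct on $N$ (outer loop) and, for fixed $N$, on $k$ (inner loop).

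\emph{Base cases.} If $N=1$ the cost vanishes identically and $P=\rho$ works. If $k=N+1$, the $N$-subsets of $\gra{x_1,\dots,x_{N+1}}$ are the complements $S_m=\gra{x_1,\dots,x_{N+1}}\setminus\gra{x_m}$; setting $\nu(S_m)=1-Nb_m$ (non-negative by the above, and summing to $(N+1)-N=1$), the marginal at $x_m$ is $\frac1N\sum_{i\neq m}(1-Nb_i)=\frac1N\bigl(1-(1-Nb_m)\bigr)=b_m$, as required.

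\emph{Inductive step.} Assume $N\geq2$ and $k\geq N+2$. Apply Lemma \ref{t-lemma} to obtain $t_2,\dots,t_k$, and split the marginal into the part ``carried together with $x_1$'' and the rest by setting
\[
 P \;=\; Nb_1\,\bigl(\delta_{x_1}\otimes R\bigr)_{sym}\;+\;P',
\]
where $R\in\P(\R^{(N-1)d})$ is a symmetric $(N-1)$-transport plan on $\gra{x_2,\dots,x_k}$ with marginal mass $r_m=\frac{t_m}{(N-1)b_1}$ at $x_m$, and $P'$ is $(1-Nb_1)$ times a symmetric $N$-transport plan on $\gra{x_2,\dots,x_k}$ with marginal mass $\frac{b_m-t_m}{1-Nb_1}$ at $x_m$ (with $P'=0$ if $1-Nb_1=\sum_{m\geq2}(b_m-t_m)=0$). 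By Lemma \ref{sym-marginals}, the marginal of $\bigl(\delta_{x_1}\otimes R\bigr)_{sym}$ equals $\frac1N\delta_{x_1}+\frac{N-1}{N}\pi(R)$, so $Nb_1\bigl(\delta_{x_1}\otimes R\bigr)_{sym}$ puts mass $b_1$ at $x_1$ and $(N-1)b_1\,r_m=t_m$ at $x_m$; adding the $b_m-t_m$ from $P'$ recovers $b_m$. Property (i) of Lemma \ref{t-lemma} gives $\sum_{m\geq2}r_m=1$ and $\sum_{m\geq2}(b_m-t_m)=1-Nb_1$, so $R$ and the rescaled factor of $P'$ are genuine probabilities. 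Existence of $R$ is the inductive hypothesis at $(N-1,k-1)$: the $r_m$ are decreasing by property (ii), and the required hypothesis $(N-2)r_2\le\sum_{m\geq3}r_m$ is property (iii) divided by $(N-1)b_1>0$. Existence of $P'$ is the inductive hypothesis at $(N,k-1)$: the $b_m-t_m$ are decreasing by property (ii), and the required hypothesis $(N-1)(b_2-t_2)\le\sum_{m\geq3}(b_m-t_m)$ is property (iv). Then $P$ is symmetric, supported on tuples with pairwise distinct coordinates and with all marginals equal to $\rho$, so it is the sought plan; the recursion is well founded since each call strictly decreases $N$ or $k$ while staying in the range $k>N$, so it terminates at one of the two base cases.

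The genuine difficulty is entirely in Lemma \ref{t-lemma}: once a tuple $(t_j)$ with its four properties is available, the bookkeeping above is mechanical, and I expect the delicate point to be exactly the verification of (iii) and (iv) for that particular splitting of the $b_j$'s — which is precisely why the lemma is isolated first. As a consistency check, the whole proposition is equivalent to the classical fact that the polytope $\gra{x\in[0,1]^k:\sum_i x_i=N}$ has as vertices exactly the $0/1$-vectors with $N$ ones: condition \eqref{mega-prop-condition} says $(Nb_1,\dots,Nb_k)$ lies in this polytope, so writing it as a convex combination of such vertices and symmetrising over the orderings of the corresponding $N$-subsets yields $P$ at once — the induction above being merely a self-contained constructive version of this.
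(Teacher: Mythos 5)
Your proof is correct and follows essentially the same route as the paper: the same double induction on $(N,k)$, with the trivial $N=1$ base case, the complement-subset construction for $k=N+1$ (your explicit weights $\nu(S_m)=1-Nb_m$ are exactly the paper's $A_N^{-1}$ solution once one normalises $\sum_j b_j=1$), and the same inductive step that splits $b_j=t_j+(b_j-t_j)$ via Lemma \ref{t-lemma} and tensors the $(N-1)$-variable plan with $\delta_{x_1}$ before symmetrising, so that $Nb_1(\delta_{x_1}\otimes R)_{sym}$ coincides with the paper's $\tfrac{1}{N-1}\sum_j Q_1\otimes_j\delta_{x_1}$. The only cosmetic difference is that you rescale everything to probabilities — harmless here since \eqref{mega-prop-condition} is scale-invariant — whereas the paper works directly with non-normalised marginals, which is the form in which the proposition is later applied.
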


\begin{proof} For every pair of positive integers $(N,k)$, with $k > N$, let $\mathfrak{P}(N,k)$ be the following proposition:

\vsmallspace

\begin{adjustwidth}{1cm}{1cm}
 Let $(x_1,\dotsc, x_k; b_1, \dotsc, b_k)$ with $(N-1)b_1 \leq b_2 + \dotsb + b_k$. Then for every $(x_1, \dotsc, x_k)$ there exists a symmetric $N$-transport plan of finite cost with marginals $b_1 \delta_{x_1} + \dotsb + b_k \delta_{x_k}$.
\end{adjustwidth}

\vsmallspace

We will prove $\mathfrak{P}(N,k)$ by double induction, in the following way: first we prove $\mathfrak{P}(1,k)$ for every $k$ and $\mathfrak{P}(N,N+1)$ for every $N$. Then we prove
\[
 \mathfrak{P}(N-1,k) \wedge \mathfrak{P}(N,k-1) \implies \mathfrak{P}(N,k).
\]

\paragraph{Proof of $\mathfrak{P}(1,k)$} This is trivial: simply take $b_1 \delta_{x_1} + \dotsb + b_k \delta_{x_k}$ as a ``transport plan''.
 
\paragraph{Proof of $\mathfrak{P}(N,N+1)$} Let us denote by $A_N$ the $(N+1)\times (N+1)$ matrix
\[
 A_N = \begin{pmatrix} 0 & 1 & \cdots & 1 \\ 1 & 0 & \cdots & 1 \\ \vdots & & \ddots & \\ 1 & \cdots & 1 & 0 \end{pmatrix},
\]
whose inverse is
\[
 A_N^{-1} = \frac{1}{N} \begin{pmatrix} -(N-1) & 1 & \cdots & 1 \\ 1 & -(N-1) & \cdots & 1 \\ \vdots & & \ddots & \\ 1 & \cdots & 1 & -(N-1) \end{pmatrix}
\]

Define also the following $(N+1)\times N$ matrix, with elements in $\R^d$:
\[
 (x_{ij}) = \begin{pmatrix} x_2 & x_3 & \cdots & x_{N+1} \\ x_1 & x_3 & \cdots & x_{N+1} \\ \vdots & \vdots & \ddots & \vdots \\ x_1 & x_2 & \cdots & x_N \end{pmatrix},
\]
where the $i$-th row is $(x_1, \dotsc, x_{i-1}, x_{i+1}, \dotsc, x_{N+1})$.
We want to construct a transport plan of the form
\[
 P = N \sum_{i = 1}^{N+1} a_i (\delta_{x_{i1}} \otimes \dotsb \otimes \delta_{x_{iN}})_{sym},
\]
where $a_i \geq 0$. Note that, by Lemma \ref{sym-marginals}, the marginals of $P$ are equal to
\[
 \pi(P) = \sum_{j = 1}^{N+1} \left( \sum_{\substack{ i = 1 \\ i \neq j}}^{N+1} a_i \right) \delta_{x_j}.
\]

Thus, the condition on the $a_i$'s to have $\pi(P) = \rho$ is
\[
 A_N \begin{pmatrix} a_1 \\ \vdots \\ a_{N+1} \end{pmatrix} = \begin{pmatrix} b_1 \\ \vdots \\ b_{N+1} \end{pmatrix},
\]
\ie,
\[
 \begin{pmatrix} a_1 \\ \vdots \\ a_{N+1} \end{pmatrix} = A_N^{-1} \begin{pmatrix} b_1 \\ \vdots \\ b_{N+1} \end{pmatrix}.
\]

Finally, observe that the condition \eqref{main-condition} implies that $a_1 \geq 0$, while the fact that $b_1 \geq b_2 \geq \dotsb \geq b_{N+1}$ leads to $a_1 \leq a_2 \leq \dotsb \leq a_{N+1}$, and hence $a_i \geq 0$ for every $i$ and we are done. 

\paragraph{Inductive step} Let $(b_1, \dotsc, b_k)$ satisfying \eqref{main-condition}, with $k \geq N+2$ (otherwise we are in the case $\mathfrak{P}(N,N+1)$, already proved). Take $t_2,\dotsc, t_k$ given by Lemma \ref{t-lemma}, and apply the inductive hypotheses to find
\begin{itemize}
 \item a symmetric transport plan $Q_1$ of finite cost in $(N-1)$ variables, with marginals
 \[
  \pi(Q_1) = \sum_{j=2}^k t_j \delta_{x_j}; 
 \]
 \item a symmetric transport plan $R$ of finite cost in $N$ variables, with marginals
 \[
  \pi(R) = \sum_{j=2}^k (b_j-t_j) \delta_{x_j}.
 \]
\end{itemize}

Define
\[
 Q = \frac{1}{N-1} \sum_{j = 1}^N (Q_1 \otimes_j \delta_{x_1}).
\]

Since $Q_1$ is symmetric, $Q$ is symmetric. Moreover, using Lemma \ref{t-lemma} (i),
\[
 \pi(Q) = \frac{1}{N-1} \delta_{x_1} \sum_{j=2}^k t_j + \sum_{j=2}^k t_j \delta_{x_j} = b_1 \delta_{x_1} + \sum_{j=2}^k t_j \delta_{x_j}.
\]

The transport plan $P = Q+R$ is symmetric, with marginals $\pi(P) = b_1 \delta_{x_1} + \dotsb + b_{k} \delta_{x_k}$.

\end{proof}

In order to conclude the proof of this Section, we must now deal not only with the non-atomic part of $\rho$, but also with the additional hypothesis of Proposition \ref{mega-prop}. Indeed, the presence of a non-atomic part will fix the atomic mass exceeding the inequality \eqref{mega-prop-condition}, as will be seen soon.

\begin{mydef} Given $N$, we say that the tuple $(b_1, \dotsc, b_\ell)$ is \emph{fast decreasing} if
 \[
  (N-j)b_j > \sum_{i > j} b_i \hsmallspace \forall j = 1, \dotsc, \ell-1.
 \]
\end{mydef}

\begin{remark} \label{maximal-fd-part} Note that if $(b_1, \dotsc, b_\ell)$ is fast decreasing, then necessarily $\ell < N$. As a consequence, given any sequence $(b_1, b_2, \dotsc )$, even infinite, we may select its maximal fast decreasing initial tuple $(b_1, \dotsc, b_\ell)$ (which might be empty, \ie, $\ell = 0$).
\end{remark}

\begin{thm} \label{thm-ell-atoms} If $\rho$ is such that
	\[\rho = \sigma + \sum_{j = 1}^k b_j \delta_{x_j}\]
	with $k > N$ atoms, then there exists a transport plan of finite cost.

\end{thm}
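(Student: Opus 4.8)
The plan is to split the atoms of $\rho$ according to the maximal fast decreasing initial tuple and then glue together the two constructions already at our disposal: Proposition~\ref{mega-prop} for purely atomic marginals satisfying \eqref{mega-prop-condition}, and the transport-map construction of Section~\ref{zero-atoms} together with the tensor-and-symmetrize construction of Proposition~\ref{partition-construction} for the non-atomic part and the ``heavy'' atoms. Write $\rho = \sigma + \sum_{j=1}^k b_j\delta_{x_j}$ with $b_1 \ge b_2 \ge \dots \ge b_k > 0$ and $k > N$; note $\mu(\rho) = b_1 < 1/N$ by \eqref{conc-condition}. Using Remark~\ref{maximal-fd-part} I would let $(b_1, \dots, b_\ell)$ be the maximal fast decreasing initial tuple, so that $0 \le \ell < N$ while $(b_1, \dots, b_{\ell+1})$ is \emph{not} fast decreasing, and I would distinguish the cases $\ell = 0$ and $\ell \ge 1$.

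In the case $\ell = 0$ the failure of fast decreasingness at the first index reads $(N-1)b_1 \le b_2 + \dots + b_k$, which is precisely \eqref{mega-prop-condition} for the whole tuple $(b_1,\dots,b_k)$ (with $k>N$). Proposition~\ref{mega-prop} then gives a symmetric $N$-transport plan $P_{\mathrm{at}}$ of finite cost with marginals $\sum_{j=1}^k b_j\delta_{x_j}$; if $\sigma = 0$ we are done, and otherwise applying Section~\ref{zero-atoms} to $\sigma/\abs{\sigma}$ and rescaling by $\abs{\sigma}$ produces a symmetric measure $P_\sigma$ with marginals $\sigma$ supported outside some $D_\alpha$, so that $P = P_{\mathrm{at}} + P_\sigma$ works, the costs simply adding up.

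In the case $\ell \ge 1$, the conditions defining fast decreasingness for $(b_1,\dots,b_\ell)$ and for $(b_1,\dots,b_{\ell+1})$ agree for the indices $1,\dots,\ell$, so the failure for the longer tuple occurs at index $\ell+1$, i.e. $(N-\ell-1)b_{\ell+1} \le b_{\ell+2} + \dots + b_k$; this is exactly \eqref{mega-prop-condition} read in dimension $M := N-\ell$ for the atoms $x_{\ell+1},\dots,x_k$ (there are $k-\ell > M$ of them), so Proposition~\ref{mega-prop} provides a symmetric $M$-transport plan $R$ of finite cost with marginals $\sum_{j=\ell+1}^k b_j\delta_{x_j}$. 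On the other hand, fast decreasingness at the first index gives $(N-1)b_1 > b_2 + \dots + b_k$, hence $\abs{\sigma} = 1 - \sum_{j=1}^k b_j > 1 - Nb_1 > 0$: there is a guaranteed reservoir of non-atomic mass, which is what makes the partition tools of Corollaries~\ref{cor-partition-1}--\ref{cor-partition-2} applicable. The idea is then to carry the heavy atoms $x_1,\dots,x_\ell$ on $\ell$ coordinates in the style of the blocks $\lambda_i = \delta_{x_1}\otimes\dots\otimes\delta_{x_i}\otimes(\text{slices of }\sigma)$ of Proposition~\ref{partition-construction}, to carry $R$ on the remaining $M = N-\ell$ coordinates, to fill in the rest of the marginals with suitably cut pieces of $\sigma$ kept at positive mutual distance, and finally to symmetrize over $S_N$; computing the marginals by Lemma~\ref{sym-marginals} and matching them to $\rho$ is a bookkeeping computation in which the fast-decreasing inequalities for $(b_1,\dots,b_\ell)$ supply precisely the non-atomic mass required. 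Finiteness of the cost is then automatic: $R$ avoids the diagonal since the $x_j$ are distinct, the $\sigma$-slices and atoms are pairwise separated by construction, so the whole symmetrized plan is supported outside some $D_\alpha$.

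I expect the gluing step in the case $\ell \ge 1$ to be the main obstacle: one must set up the right partition of $\sigma$ and the right superposition of the $\ell$ Dirac coordinates, the non-atomic slices, and the plan $R$, so that each of the $N$ marginals comes out to be exactly $\rho$ --- this is a more elaborate version of the mass accounting already carried out in Proposition~\ref{partition-construction}, and arranging the fast-decreasing inequalities to match the available non-atomic mass is the delicate point. Everything else (the case $\ell = 0$, the reduction to dimension $N-\ell$ for the light atoms, and the finiteness of the cost via separation from the diagonal) should be routine given the results proved above.
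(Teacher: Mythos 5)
Your skeleton coincides with the paper's strategy: extract the maximal fast decreasing initial tuple $(b_1,\dotsc,b_\ell)$, use maximality to get $(N-\ell-1)b_{\ell+1}\leq \sum_{j\geq \ell+2}b_j$, and apply Proposition \ref{mega-prop} in dimension $N-\ell$ to obtain a symmetric plan $R$ with marginals $\sum_{j>\ell}b_j\delta_{x_j}$; the case $\ell=0$ is handled exactly as you say. However, for $\ell\geq 1$ the step you defer as ``bookkeeping'' and yourself call ``the main obstacle'' is precisely the content of the theorem, and your proposal does not carry it out: you never specify how $R$ is lifted to $N$ coordinates, never compute the marginals of the lifted plan, and never verify that the residual marginal can be realized by a finite-cost plan. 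As written this is a genuine gap, not a routine verification.

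What closes it in the paper is an explicit lift plus a clean decoupling. One adjoins the heavy atoms one at a time, $P_j=\frac{1}{N-j}\sum_{i=j}^{N}(P_{j+1}\otimes_i\delta_{x_j})$ for $j=\ell,\dotsc,1$ starting from $P_{\ell+1}=R$ (equivalently, up to normalization by $\tfrac{N}{N-\ell}$, one symmetrizes $\delta_{x_1}\otimes\dotsb\otimes\delta_{x_\ell}\otimes R$), and Lemma \ref{sym-marginals} gives $\pi(P_1)=q_\ell\sum_{i=1}^{\ell}\delta_{x_i}+\sum_{i>\ell}b_i\delta_{x_i}$ with $q_\ell=\bigl(\sum_{i>\ell}b_i\bigr)/(N-\ell)$. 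The point your sketch misses is that no new partition of $\sigma$ interleaved with $R$ is needed: the leftover marginal $\sigma+\sum_{i\leq\ell}(b_i-q_\ell)\delta_{x_i}$ is handed wholesale to Proposition \ref{partition-construction}, which applies because (a) $b_i-q_\ell>0$ for $i\leq\ell$ by the fast-decreasing inequality $(N-\ell)b_\ell>\sum_{i>\ell}b_i$, and (b) condition \eqref{existence-condition-eq2} reduces, after the cancellation $\sum_{i\leq\ell}b_i+(N-\ell)q_\ell=\sum_{i=1}^{k}b_i$, exactly to $Nb_1<1$, i.e.\ to \eqref{conc-condition} --- so the available non-atomic mass is guaranteed by the concentration hypothesis, not, as you suggest, by the fast-decreasing inequalities. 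Adding $P_1$ to the plan from Proposition \ref{partition-construction} gives marginals $\rho$, and finiteness of the cost follows by separation from the diagonal as you describe.
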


\begin{proof} Consider $(b_1, \dotsc, b_k)$ and use the Remark \ref{maximal-fd-part} to select its maximal fast decreasing initial tuple $(b_1, \dotsc, b_\ell)$, $\ell < N$. Thanks to Proposition \ref{mega-prop}, we may construct a transport plan $P_{\ell+1}$ over $\R^{(N-\ell)d}$ with marginals $b_{\ell+1} \delta_{x_{\ell+1}} + \dotsb + b_k\delta_{x_k}$, since
\[
 (N-\ell-1)b_{\ell+1} \leq \sum_{j = \ell+2}^k b_j
\]
by maximality of $(b_1, \dotsc, b_\ell)$ --- and this is condition \eqref{main-condition} in this case. We extend step by step $P_{\ell+1}$ to an $N$-transport plan, letting
\[
 P_{j} = \frac{1}{N-j} \sum_{i = j}^N (P_{j+1} \otimes_i \delta_{x_j}),
\]
for $j = \ell, \ell-1, \dotsc, 1$.

Let $p_\ell = b_{\ell+1} + \dotsb + b_{k}$, and $q_\ell = \tfrac{p_\ell}{N-\ell}$. We claim that $\abs{P_j} = (N-j+1) q_\ell$. In fact, by construction $\abs{P_{\ell+1}} = p_\ell$, and inductively
\[
 \abs{P_j} = \frac{1}{N-j} \sum_{i = j-1}^N \abs{P_{j+1}} = \frac{N-j+1}{N-j} (N-j) q_\ell = (N-j+1) q_\ell.
\]

Moreover,
\[
 \pi(P_j) = \sum_{i = j}^k q_\ell \delta_{x_i} + \sum_{i = \ell+1}^k b_i \delta_{x_i}.
\]

This is true by construction in the case $j = \ell+1$, and inductively
\[
 \pi(P_j) = \frac{1}{N-j} \delta_{x_j} \abs{P_{j+1}} + \frac{N-j}{N-j} \pi(P_{j+1}) = \sum_{i = j}^\ell q_\ell \delta_{x_i} + \sum_{i = \ell+1}^k b_i \delta_{x_i}.
\]

Note that, for every $i = 1, \dotsc, \ell$, $b_i \geq b_\ell > q_\ell$. We shall find, using Proposition \ref{partition-construction}, a transport plan of finite cost with marginals
\[
 \sigma + \sum_{i = 1}^\ell (b_i-q_\ell) \delta_{x_i},
\]

since the condition \eqref{existence-condition-eq2} reads
\[
 N(b_1 - q_\ell) - \sum_{i = 1}^\ell (b_i-q_\ell) = Nb_1 - \sum_{i = 1}^\ell b_i - (N-\ell) q_\ell < 1 - \sum_{i = 1}^k b_i = \abs{\sigma}.
\]

\end{proof}

%%%%%%%%%%%%%%%%%%%

\section{Marginals with countably many atoms} \label{final-section}

In this Section we finally deal with the case of an infinite number of atoms, \ie,
\[
 \rho = \sigma + \sum_{j = 1}^\infty b_j \delta_{x_j} 
\]
with $b_j > 0$, $b_{j+1} \leq b_{j}$ for every $j \geq 1$.

The main issue is of topological nature: if the atoms $x_j$ are too close each other (for example, if they form a dense subset of $\R^d$) and the growth of $b_j$ for $j \to \infty$ is too slow, the cost might diverge. With this in mind, we begin with an elementary topological result, in order to separate the atoms in $N$ groups, with controlled minimal distance from each other.

\begin{lemma} \label{set-partition}
There exists a partition $\R^d = E_2 \sqcup \dotsb \sqcup E_{N+1}$ such that:
\begin{enumerate}[(i)]
 \item for every $j=2, \dotsc, N+1$, $x_j \in \mathring{E}_j$;
 \item for every $j = 2, \dotsc, N+1$, $\partial E_j$ does not contain any $x_i$.
\end{enumerate}
\end{lemma}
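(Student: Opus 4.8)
The plan is to realise the $E_j$ as (a mild perturbation of) the Voronoi cells of the $N$ points $x_2,\dots,x_{N+1}$. First consider the ordinary open Voronoi cells
\[
 V_j=\gra{x\in\R^d:\ \abs{x-x_j}<\abs{x-x_i}\ \text{ for every }i\in\{2,\dots,N+1\},\ i\neq j},\qquad j=2,\dots,N+1.
\]
These are open and pairwise disjoint, each $x_j$ lies in $V_j$ since the centres are distinct, and the leftover set $W=\R^d\setminus\bigcup_j V_j$ is contained in the finite union of bisecting hyperplanes $H_{ab}=\gra{x:\abs{x-x_a}=\abs{x-x_b}}$, $2\le a<b\le N+1$. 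Putting $E_2=V_2\cup W$ and $E_j=V_j$ for $j\ge3$ already gives a partition of $\R^d$ with $x_j\in\mathring E_j$ and $\partial E_j\subseteq W$, so (i) holds and (ii) holds at least for $x_i\in\{x_2,\dots,x_{N+1}\}$ (these centres lie on no $H_{ab}$).

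What remains is to make the walls avoid the other atoms $x_i$ (those with $i\notin\{2,\dots,N+1\}$), which could by accident sit on some $H_{ab}$. For this I would pass to a power diagram: pick weights $w_2,\dots,w_{N+1}\in\R$ and use $\abs{x-x_j}^2-w_j$ in place of $\abs{x-x_j}^2$; the walls then lie on the hyperplanes
\[
 H_{ab}^{w}=\gra{x:\ 2(x_b-x_a)\cdot x=\abs{x_b}^2-\abs{x_a}^2+w_a-w_b}.
\]
Set $m=\min_{a\neq b}\abs{x_a-x_b}^2>0$ and choose $w_2,w_3,\dots,w_{N+1}$ one after another, requiring at the $j$-th step that $\abs{w_j}<m/2$ and that $w_j$ avoid the countable set of exceptional values for which some atom $x_i$, $i\ge1$, lies on a wall $H_{aj}^{w}$ with $a<j$: for fixed $a$ and fixed $x_i$ the condition $x_i\in H_{aj}^{w}$ determines $w_j$ uniquely (the already chosen $w_a$ being given), so only countably many values of $w_j$ are forbidden, while the constraint $\abs{w_j}<m/2$ still leaves room. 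That bound forces $w_i-w_j<m\le\abs{x_i-x_j}^2$ for all centres $i\neq j$, hence $x_j$ stays in its own open power cell.

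Finally I would let $V_j^{w}$ be the open power cell of $x_j$, put $W=\R^d\setminus\bigcup_{j=2}^{N+1}V_j^{w}$, and take $E_j=V_j^{w}$ for $j\ge3$ and $E_2=V_2^{w}\cup W$. Then one checks the three points: $W$ is closed and contained in the union of the walls $H_{ab}^{w}$, hence contains no atom by the choice of weights; $\partial E_j\subseteq W$ for every $j$, because a topological boundary point of a cell must satisfy some equality $\abs{x-x_j}^2-w_j=\abs{x-x_i}^2-w_i$ and so lies on a wall; and $x_j\in V_j^{w}\subseteq\mathring E_j$. I expect the one genuinely delicate point to be this genericity step — producing a single choice of weights that simultaneously pushes all $\binom{N}{2}$ walls off the whole (possibly infinite) atom set while keeping every centre interior to its cell; the inductive selection above is exactly what supplies it. (If (ii) is only intended for $x_i\in\{x_2,\dots,x_{N+1}\}$, the plain Voronoi diagram of the first paragraph already suffices and this step can be skipped.)
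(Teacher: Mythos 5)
Your proof is correct, but it follows a genuinely different route from the paper's. The paper's construction is far more modest: around each of $x_3,\dotsc,x_{N+1}$ it places a small ball $B(x_j,t_j)$, the radius $t_j$ being picked inside an interval $(0,r_j)$ so that the sphere $\partial B(x_j,t_j)$ contains no atom --- possible because the countably many atoms exclude only countably many radii --- and then $E_2$ is simply the complement of the union of these balls. Your power-diagram argument rests on exactly the same genericity idea (countably many atoms forbid only countably many parameter values), transferred from sphere radii to the weights $w_2,\dotsc,w_{N+1}$, and it requires in exchange the extra bookkeeping you supply: the walls $H^w_{ab}$ are hyperplanes, the cells are open and pairwise disjoint, every boundary point of a cell lies on a wall, the leftover set $W$ is contained in the walls, and the bound $\abs{w_j}<m/2$ keeps each centre inside its own cell --- all of which you verify correctly, including the key observation in your last parenthesis that the unweighted Voronoi diagram would not suffice, since its walls may pass through atoms $x_i$ with $i\notin\gra{2,\dotsc,N+1}$; condition (ii) is indeed meant for \emph{all} atoms, so your perturbation step is necessary rather than optional. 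What your version buys is symmetry among the $N$ pieces (convex polyhedral cells for every index, rather than $N-1$ balls plus one big complementary set); what the paper's buys is brevity, since one one-parameter family of spheres per point replaces the inductive choice of weights and the structural facts about power cells. Both yield valid proofs of the lemma.
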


\begin{proof}
 For $j = 3, \dotsc, N+1$ let $r_j > 0$ small enough such that
 \[
  x_i \notin B(x_j, r_j) \hsmallspace \text{for every $i = 1, \dotsc, N$, $i \neq j$.}
 \]
 
 Fixed any $j = 3, \dotsc, N+1$, by a cardinality argument there must be a positive real $t_j$ with $0 < t_j < r_j$ and $\partial B(x_j, t_j)$ not containing any $x_i$, $i \geq 1$. We take $E_j = B(x_j, t_j)$ for $j=3, \dotsc, N+1$. Note that this choice fullfills the conditions (i), (ii) for $j = 3, \dotsc, N+1$. Finally, we take
 \[
  E_2 = \R^d \setminus \left( \bigcup_{j = 3}^{N+1} E_j \right)
 \]
 Clearly $x_2 \in \mathring{E_2}$, and moreover the condition (ii) is satisfied, since
 \[
  \partial E_2 = \bigcup_{j=3}^{N+1} \partial E_j.
 \]
\end{proof}

Consider the partition given by Lemma \ref{set-partition}, and define the corresponding partition of $\N$ given by $\N = A_2 \cup \dotsb \cup A_{N+1}$, where
\[
 A_j = \gra{i \in \N \st x_i \in E_j}.
\]

Next we consider, for every $j = 2, \dotsc, {N+1}$ a threshold $n_j \geq 2$ large enough such that, defining
\[
 \epsilon_j = \sum_{\substack{ i \geq n_j \\ i \in A_j}} b_i,
\]
then
\begin{equation} \label{eps-condition}
 \epsilon_2 + \dotsb + \epsilon_{N+1} < \min \gra{ b_{N+1}, \frac{1}{N} - b_1}.
\end{equation}

This may be done since the series $\sum b_i$ converges, and hence for every $j = 2, \dotsc, {N+1}$ the series
\[
 \sum_{i \in A_j} b_i
\]
is convergent.

For every $j = 2, \dotsc, N+1$ define the following transport plan:
\[
 P_j = N \left[ \left(\sum_{i \in A_j, i \geq n_j} b_i \delta_{x_i} \right) \otimes \delta_{x_2} \otimes \dotsb \otimes \hat{\delta}_{x_j} \otimes \dotsb \otimes \delta_{x_{N+1}} \right]_{sym},
\]
and note that, by Lemma \ref{sym-marginals},
\[
 \pi(P_j) = \epsilon_j \sum_{\substack{h = 2 \\ h \neq j}}^{N+1} \delta_{x_h} + \sum_{\substack{ i \geq n_j \\ i \in A_j}} b_i \delta_{x_i}.
\]

Then let
\[
 P_{\infty} = \sum_{j = 2}^{N+1} P_j,
\]
and observe that
\[
 \pi(P_{\infty}) = \sum_{j = 2}^{N+1} \left( \sum_{\substack{i = 2 \\ i \neq j}}^{N+1} \epsilon_i \right) \delta_{x_j} + \sum_{j = 2}^{N+1} \sum_{\substack{ i \geq n_j \\ i \in A_j}} b_i \delta_{x_i}.
\]

Let now
\[
 \tilde{b}_i = \left\{
 \begin{array}{ll} \displaystyle
  b_i - \sum_{\substack{h = 2 \\ h \neq i}}^{N+1} \epsilon_h &\text{if $2 \leq i \leq N+1$} \\
  0 & \text{if $i \geq n_j$ and $i \in A_j$ for some $j = 2, \dotsc, N+1$} \\
  b_i & \text{otherwise}.
 \end{array} \right.
\]

We are left to find a transport plan of finite cost with marginals
\[
 \sigma + \sum_{i = 1}^\infty \tilde{b}_i \delta_{x_i},
\]
which has indeed a finite number of atoms. Note that $\tilde{b}_i \geq 0$ for every $i$, thanks to condition \eqref{eps-condition}. Moreover, since $\tilde{b}_1 = b_1$ and $\tilde{b}_j \leq b_j$, then $\tilde{b}_1 \geq \tilde{b}_j$ for every $j \in \N$, as is used in what follows. If
\[
 (N-1)\tilde{b}_1 \leq \sum_{i = 2}^\infty \tilde{b}_i
\]
we may conclude using Proposition \ref{mega-prop}. Otherwise, we proceed like in the proof of Theorem \ref{thm-ell-atoms}, with $\{ \tilde{b}_j \}$ replacing $\gra{b_j}$. At the final stage, it is left to check that
\[
 N(\tilde{b}_1 - \tilde{q}_{k+1}) - \sum_{i = 1}^k (\tilde{b}_i - \tilde{q}_{k+1}) < 1 - \sum_{i = 1}^\infty b_i = \abs{\sigma}.
\]

Indeed this is true, since using the condition \eqref{eps-condition} one gets
\[
 N(\tilde{b}_1 - \tilde{q}_{k+1}) - \sum_{i = 1}^k (\tilde{b}_i - \tilde{q}_{k+1}) = Nb_1 - \sum_{i = 1}^\infty b_i + N(\epsilon_2 + \dotsc + \epsilon_{N+1}) < 1 - \sum_{i = 1}^\infty b_i.
\]

\bibliographystyle{plain}

\bigskip
{\small \noindent
\textsc{Ugo Bindini}\\
Scuola Normale Superiore,\\
Piazza dei Cavalieri, 7\\
56127 Pisa - ITALY\\
{\tt ugo.bindini@sns.it}\\

\end{document}